\documentclass[11pt]{amsart}

\usepackage[utf8]{inputenc}  

\pagestyle{plain}

\usepackage{amsthm}
\usepackage{amsfonts}
\usepackage{amsmath}
\usepackage{amssymb}
\let\amssquare\square
\usepackage{mathtools} 
\usepackage{stmaryrd}
\usepackage{marvosym}
\usepackage{multicol}

\usepackage{xcolor}
\definecolor{darkgreen}{rgb}{0,0.45,0}
\definecolor{darkred}{rgb}{0.75,0,0}
\definecolor{darkblue}{rgb}{0,0,0.6}
\usepackage[colorlinks,citecolor=darkgreen,linkcolor=darkred,urlcolor=darkblue]{hyperref}
\usepackage{breakurl}
\usepackage[mathscr]{eucal}
\usepackage{enumerate} 
\usepackage[capitalize]{cleveref}

\textwidth = 6.5 in
\textheight = 9.5 in
\oddsidemargin = 0.0 in
\evensidemargin = 0.0 in
\topmargin = -.5 in
\headheight = 0 in
\headsep = 0.0 in
\parskip = 0.1in
\parindent = 0.0in

\usepackage{mathpartir}

\usepackage{tikz}
\usetikzlibrary{arrows}
\tikzset
{
  diagram/.style=
  {
    matrix of math nodes,
    column sep=2.5em,
    row sep=2.5em,
    text height=1.5ex,
    text depth=.25ex
  },
  cross line/.style={preaction={draw=white,-,line width=6pt}},
  every to/.style={font=\footnotesize},
  cof/.style={>->},                
  fib/.style={->>},                
  inj/.style={right hook->},       
  inj2/.style={left hook->},       
  surj/.style={-open triangle 60}, 
  eq/.style={-,double distance=1.7pt}
}


\makeatletter
\newenvironment{ctikzpicture}
{
  \begingroup
  \smallskip
  \par
  \centering
  \begin{tikzpicture}
}{
  \end{tikzpicture}
  \par
  \smallskip
  \endgroup
  \aftergroup\@afterindentfalse
  \aftergroup\@afterheading
}
\makeatother
\usepackage{tikz-cd}

\theoremstyle{plain}
\newtheorem{theorem}{Theorem}[section]

\newtheorem{proposition}[theorem]{Proposition}
\newtheorem{lemma}[theorem]{Lemma}
\newtheorem{corollary}[theorem]{Corollary}

\theoremstyle{definition}

\newtheorem{example}[theorem]{Example}
\newtheorem{examples}[theorem]{Examples}

\newtheorem{remark}[theorem]{Remark}

\usetikzlibrary{matrix}

\tikzset
{
  diagram/.style=
  {
    matrix of math nodes,
    column sep=2.5em,
    row sep=2.5em,
    text height=1.5ex,
    text depth=.25ex
  },
  every to/.style={font=\footnotesize},
}
\makeatletter
\renewcommand{\paragraph}{\@startsection{paragraph}{4}{0mm}{-0.5\baselineskip}{-1ex}{\bf}}
\makeatother


\newcommand{\Cat}{\mathsf{Cat}} 
\newcommand{\cSet}{\mathsf{cSet}} 
\newcommand{\Set}{\mathsf{Set}} 
\newcommand{\sSet}{\mathsf{sSet}} 
\newcommand{\Top}{\mathsf{Top}} 



\newcommand{\id}{\mathrm{id}}
\newcommand{\op}{\mathrm{op}} 


\newcommand{\bbN}{\mathbb{N}} 

\newcommand{\adjoint}{\dashv} 

\renewcommand{\to}{\rightarrow} 
\newcommand{\into}{\hookrightarrow} 
\newcommand{\iso}{\cong} 



\newcommand{\N}{\mathrm{N}} 

\newcommand{\Cube}{\amssquare} 

\newcommand{\Q}{\mathscr{Q}} 
\newcommand{\T}{\mathrm{T}} 
\newcommand{\U}{\mathrm{U}} 

\newcommand{\chorn}{\sqcap} 


\newcommand{\const}{\mathrm{const}} 
\newcommand{\Sat}{\mathrm{Sat}} 


\begin{document}
\title{A co-reflection of cubical sets into simplicial sets \\ with applications to model structures}
\author{Krzysztof Kapulkin \and Zachery Lindsey \and Liang Ze Wong}
\date{\today}

\begin{abstract}
We show that the category of simplicial sets is a co-reflective subcategory of the category of cubical sets with connections, with the inclusion given by a version of the straightening functor. We show that using the co-reflector, one can transfer any cofibrantly generated model structure in which cofibrations are monomorphisms to cubical sets, thus obtaining cubical analogues of the Quillen and Joyal model structures.
\end{abstract}

\maketitle


Cubical sets are a well-known alternative to simplicial sets in combinatorial homotopy theory.
They were in fact studied by Kan before the introduction of simplicial sets (see, e.g., \cite{kan:abstract-homotopy-i}) and have found manifold applications, including in formal logic \cite{cchm,cisinski:univalent-universes}, directed homotopy theory \cite{krishnan:cubical-approx}, and abstract homotopy theory \cite{CisinskiAsterisque,jardine:categorical-homotopy,maltsiniotis:cube-with-conn-is-test}.

While there is only one version of the simplex category $\Delta$, there are many different versions of the box category $\Cube$, the site for cubical sets. In each case, one takes a certain subcategory of $\Cat$, the category of small categories, generated by the posets of the form $\{0 \leq 1\}^n$. One popular choice, pursued for instance by Cisinski \cite{CisinskiAsterisque} and Jardine \cite{jardine:categorical-homotopy} is to define $\Cube$ as the smallest category containing the \emph{face} and \emph{degeneracy} maps. The drawback of this choice is that the resulting category is not a strict test category (although it is a test category).

In this paper, we consider the category of cubical sets with \emph{connections}, which is known to be a strict test category \cite{maltsiniotis:cube-with-conn-is-test}. A connection is a new kind of degeneracy map that allows us, e.g., to consider a $1$-cube $a \overset{f}{\to} b$ as a degenerate $2$-cube as follows:
 \begin{ctikzpicture}
   \matrix[diagram]
   {
     |(a)| b & |(b)| b \\
     |(c)| a & |(d)| b \\
   };

   \draw[->] (c) to node[left] {$f$} (a);
   \draw[double equal sign distance] (a) to (b);
   \draw[->] (c) to node[above] {$f$} (d);
   \draw[double equal sign distance] (d) to (b);
 \end{ctikzpicture}
This is the minimal category allowing for the definition of the cubical homotopy coherent nerve functor and the Grothendieck construction (also known as (un)straightening).

\subsection*{Contributions.} The first contribution of the present paper is the proof (cf.~\cref{thm:coreflection}) that the straightening-over-the-point functor of \cite{kapulkin-voevodsky:cubical-straightening} defines an inclusion of the category of simplicial sets into the category of cubical sets as a co-reflexive subcategory (with the unstraightening as the co-reflector). The second is a transfer theorem (\cref{thm:model-structure-cSet}) for model structures. Specifically, given a cofibrantly generated model structure on simplicial sets in which each cofibration is a monomorphism, we can right induce (in the sense of \cite{hkrs:transfer-thm,garner-kedziorek-riehl}) a Quillen equivalent model structure on cubical sets. In particular, our theorem gives a model of the homotopy theory of $(\infty,1)$-categories in cubical sets. To our knowledge, this is the first such model.

\subsection*{Organization.} This paper is organized as follows. In \cref{prelims}, we review the background on cubical sets. In \cref{2-functors}, we describe the Grothendieck construction and carefully analyze its left adjoint. \cref{3-coreflection} contains the technical heart of the paper, culminating in the proof that the Grothendieck construction is a co-reflector. Following this, we prove our transfer theorem in \cref{4-induced-model} and discuss the resulting examples in \cref{5-examples}.

\subsection*{Acknowledgements.} We wish to thank Christian Sattler and the anonymous referee for helpful comments.


\section{Cubical sets}\label{prelims}

We write $\Delta$ for the simplex category, i.e., the category whose objects are non-empty finite ordinals $[n] = \{ 0 \leq 1 \leq  \ldots \leq n\}$ and whose maps are monotone functions. The category of simplicial sets, denoted $\sSet$, is the functor category $\Set^{\Delta^\op}$.
We adopt the usual notational conventions regarding simplicial sets, e.g., writing $\Delta^n$ for the representable simplicial sets, $\partial \Delta^n$ for their boundaries, etc.

Similarly, we write $\Cube$ for the \emph{box category with connections}. That is, the objects of $\Cube$ are posets of the form $[1]^n$ and the maps are generated (inside the category of posets) under composition by the following three special classes:
\begin{itemize}
  \item \emph{faces} $\partial^n_{i,\varepsilon} \colon [1]^{n-1} \to [1]^n$ for $i = 1, 2, \ldots , n$ and $\varepsilon = 0, 1$ given by:
  \[ \partial^n_{i,\varepsilon} (x_1, x_2, \ldots, x_{n-1}) = (x_1, x_2, \ldots, x_{i-1}, \varepsilon, x_i, \ldots, x_{n-1})\text{;}  \]
  \item \emph{degeneracies} $\sigma^n_i \colon [1]^n \to [1]^{n-1}$ for $i = 1, 2, \ldots, n$ given by:
  \[ \sigma^n_i ( x_1, x_2, \ldots, x_n) = (x_1, x_2, \ldots, x_{i-1}, x_{i+1}, \ldots, x_n)\text{;}  \]
  \item \emph{connections} $\gamma^n_i \colon [1]^n \to [1]^{n-1}$ for $i = 1, 2, \ldots, n-1$ given by:
  \[ \gamma^n_i (x_1, x_2, \ldots, x_n) = (x_1, x_2, \ldots, x_{i-1}, \max\{ x_i , x_{i+1}\}, x_{i+2}, \ldots, x_n) \text{.} \]
\end{itemize}
To simplify the notation, we will usually omit the superscript $n$ when writing specific face, degeneracy, and connection maps.
We will refer to face maps of the form $\partial_{i,1}$ as \emph{positive face maps} and to those of the form $\partial_{i,0}$ as the \emph{negative face maps}.

Alternatively, one may describe $\Cube$ as the category generated by the above maps subject to the following co-cubical identities (cf.~\cite[(5) and (16)]{grandis-mauri:cubical-sets-and-their-sites}):

\begin{multicols}{2}
\begin{alignat*}{2}
	\partial_{j, \varepsilon} \partial_{i, \varepsilon'} 
		& = \partial_{i+1, \varepsilon'} \partial_{j, \varepsilon} 
		& &	\quad\text{for } j \leq i\text{;}
	\\
	\sigma_i \sigma_j 
		& = \sigma_j \sigma_{i+1} 
		& & \quad \text{for } j \leq i\text{;}		
	\\
	\gamma_j \gamma_i 
		& = \left\{ \begin{array}{l}
			\gamma_i \gamma_{j+1} \\
			\gamma_i \gamma_{i+1}
			\end{array}\right.
		& & \;\;\begin{array}{l}
				\text{for } j > i \text{;} \\
				\text{for } j = i \text{;}
			\end{array}
	\\
	\sigma_j \partial_{i, \varepsilon}
		& = \left\{\begin{array}{l}
				\partial_{i-1, \varepsilon} \sigma_j \\
				\id                                                    \\
				\partial_{i, \varepsilon} \sigma_{j-1}
			\end{array}\right.
		& & \;\;\begin{array}{l}
				\text{for } j < i \text{;} \\
				\text{for } j = i \text{;} \\
				\text{for } j > i \text{;}
			\end{array}
\end{alignat*}
\begin{alignat*}{2}
	&
	\\
	\gamma_j \partial_{i, \varepsilon}
		& = \left\{ \begin{array}{l}
				\partial_{i-1, \varepsilon} \gamma_j \\
				\id                                  \\
				\partial_{i, \varepsilon} \sigma_i   \\
				\partial_{j, \varepsilon} \gamma_{j-1}
			\end{array}\right.
		& & \;\;\begin{array}{l}
				\text{for } j < i-1 \text{;} \\
				\text{for } j = i-1, \, i, \, \varepsilon = 0 \text{;} \\
				\text{for } j = i-1, \, i, \, \varepsilon = 1 \text{;} \\
				\text{for } j > i \text{;} 
			\end{array}
	\\
	\sigma_j \gamma_i
		& = \left\{ \begin{array}{l}
			\gamma_{i-1} \sigma_j   \\
			\sigma_i \sigma_i       \\
			\gamma_i \sigma_{j+1}
		\end{array}\right.
		& & \;\; \begin{array}{l}
			\text{for } j < i \text{;} \\
			\text{for } j = i \text{;} \\
			\text{for } j > i \text{.} 
		\end{array}
\end{alignat*}
\end{multicols}

Clearly, the set $\Cube([1]^m, [1]^n)$ is a subset of all monotone maps $[1]^m \to [1]^n$. The following proposition gives a useful characterization of those monotone functions that are valid morphisms in $\Cube$.

\begin{proposition}[Maltsiniotis, {\cite[Prop.~2.3]{maltsiniotis:cube-with-conn-is-test}}] \label{prop:maltsiniotis}
  A monotone map $f = (f_1, f_2, \ldots, f_n) \colon [1]^m \to [1]^n$ is a morphism in $\Cube$ if and only if each $f_j \colon [1]^m \to [1]$ is of the form:
  \begin{enumerate}
    \item $f_j = \const_0$ (constant function with value $0$);
    \item $f_j = \const_1$;
    \item there exists a subset $A \subseteq \{ 1, 2, \ldots, m\}$ such that $f_j = \max_A$\footnote{i.e., $f_j(x_1, x_2, \ldots, x_n) = \max\{ x_i \ | \ i \in A \}$. Not to be confused with $\max A$, which is the largest $i$ in $A$.} and if for $j < j'$ we have $f_j = \max_A$ and $f_{j'} = \max_{A'}$, then $\max A < \min A'$. \qed
  \end{enumerate}
\end{proposition}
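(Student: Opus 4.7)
\emph{Forward direction} ($\Rightarrow$): I would proceed by induction on the length of a representation of the morphism as a composition of the generating faces, degeneracies, and connections. For the base case one inspects the identity and each generator directly: the components of $\partial_{i,\varepsilon}$ are $x_1, \ldots, x_{i-1}, \varepsilon, x_i, \ldots, x_{n-1}$ (each a singleton max or a constant, arranged in the correct order), and the analogous check for $\sigma_i$ and $\gamma_i$ is immediate from the defining formulas. For the inductive step, given $f = (f_1, \ldots, f_n) \colon [1]^m \to [1]^n$ and $g = (g_1, \ldots, g_p) \colon [1]^n \to [1]^p$ both satisfying the condition, I verify that $g \circ f$ does too. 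The key computation is: if $g_k = \max_B$, then the $k$-th component of $g \circ f$ equals $\max_{i \in B} f_i$, which is $\const_1$ when some $f_i$ for $i \in B$ is $\const_1$, and otherwise equals $\max_C$ where $C = \bigcup\{A_i : i \in B,\ f_i = \max_{A_i}\}$. To verify the ordering condition, assume $k < k'$ with both components non-constant; the ordering on $g$ gives $\max B < \min B'$, and applying the ordering on $f$ to the largest non-constant index in $B$ and the smallest non-constant index in $B'$ yields $\max C < \min C'$.

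\emph{Reverse direction} ($\Leftarrow$): Given a monotone map $f = (f_1, \ldots, f_n) \colon [1]^m \to [1]^n$ of the stated form, I would explicitly factor it as a composition of generators. Let $J = \{j : f_j \text{ is non-constant}\}$, write $f_j = \max_{A_j}$ for $j \in J$, and set $A = \bigcup_{j \in J} A_j \subseteq \{1, \ldots, m\}$. First, I apply a suitable composition of degeneracies $\sigma_i$ to forget the coordinates outside $A$, giving a map $[1]^m \to [1]^{|A|}$. Second, because the $A_j$'s are pairwise disjoint and non-interleaving by hypothesis, after this projection each $A_j$ corresponds to a consecutive block of coordinates; I then apply a composition of connections $\gamma_i$ that iteratively take maxima of adjacent coordinates within each block, producing a map to $[1]^{|J|}$ whose components are exactly the $f_j$'s for $j \in J$. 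Finally, I apply face maps $\partial_{i,\varepsilon}$ to insert the required constant values $\varepsilon$ at the positions $j \notin J$, yielding the desired map to $[1]^n$.

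The main obstacle is bookkeeping rather than conceptual: each application of a generator shifts the indices of the remaining coordinates, so one has to specify the order of compositions carefully to ensure each $\sigma_i$, $\gamma_i$, or $\partial_{i,\varepsilon}$ acts at the intended position. The co-cubical identities listed in the excerpt guarantee that the necessary reorderings are available, and verifying that the constructed composite coincides pointwise with $f$ is then a routine evaluation.
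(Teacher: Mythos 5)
The paper does not actually prove this proposition: it is imported verbatim from Maltsiniotis (hence the \qed{} in the statement), so there is no in-paper argument to compare against. Your proof is correct and is essentially the standard one. The forward direction (generators satisfy the condition, and the condition is closed under composition) is sound; the one case you gloss over is when $g_k = \max_B$ and every $f_i$ with $i \in B$ equals $\const_0$, where your set $C$ is empty and the composite component is $\const_0$, landing in case (1) rather than case (3) --- worth a sentence, but harmless. The ordering argument for the composite is right: for non-constant indices $k < k'$ one has $\max B < \min B'$, so every non-constant $i \in B$ precedes every non-constant $i' \in B'$, and the hypothesis on $f$ gives $\max A_i < \min A_{i'}$, hence $\max C < \min C'$. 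Your reverse direction, which factors $f$ as (faces)$\circ$(connections)$\circ$(degeneracies), is in effect a construction of the Grandis--Mauri normal form of \cref{normal-form}; the disjointness and non-interleaving of the $A_j$ guaranteed by the ordering condition is exactly what makes each $A_j$ a consecutive block after the degeneracies are applied, so the connections can collapse it. The index bookkeeping you defer is genuinely routine. So: correct proof, no gap, just not something the paper itself establishes.
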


Moreover, using cubical identities, one can derive the following normal forms for all cubical maps.

\begin{theorem}[Grandis-Mauri] \label{normal-form}
  Every map in the category $\Cube$ can be factored uniquely as a composite
  \[ (\partial_{k_1, \varepsilon_1} \ldots \partial_{k_t, \varepsilon_t})
     (\gamma_{j_1} \ldots \gamma_{j_s})
     (\sigma_{i_1} \ldots \sigma_{i_r})\text{,} \]
  where $i_1 > \ldots > i_r \geq 1$, $1 \leq j_1 < \ldots < j_s$, and $k_1 > \ldots > k_t \geq 1$.   
\end{theorem}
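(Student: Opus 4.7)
The plan is to split the claim into existence and uniqueness and handle each with a distinct strategy.

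For existence, I would treat the cubical identities listed above as rewriting rules and show that any word in the generators can be brought into normal form in finitely many steps. This naturally breaks into two phases. In phase one, the mixed identities involving $\sigma_j \partial_{i,\varepsilon}$, $\gamma_j \partial_{i,\varepsilon}$, and $\sigma_j \gamma_i$ are used to push every face to the left of every non-face and every degeneracy to the right of every non-degeneracy, producing a factorization of the shape (faces)(connections)(degeneracies). In phase two, the three pure commutation identities for $\partial\partial$, $\gamma\gamma$, and $\sigma\sigma$ are used to reorder the indices within each block into the required monotone patterns; this phase is a standard bubble-sort argument, since each identity strictly reduces the number of out-of-order adjacent pairs.

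The main obstacle is termination of phase one. Unlike $\sigma_j \partial_{i,\varepsilon}$, which cleanly swaps an adjacent pair, the identities for $\gamma_j\partial_{i,\varepsilon}$ at $j = i-1,i$ can either collapse the pair to an identity or replace it with $\partial_{i,\varepsilon}\sigma_i$, which reintroduces a face immediately followed by a degeneracy and so undoes progress. Similarly, $\sigma_j \gamma_i$ at $j = i$ rewrites to $\sigma_i\sigma_i$, destroying a connection but creating a fresh degeneracy that must be pushed further right past any remaining connections. To guarantee termination I would introduce a well-founded measure on generator words, such as a lexicographic triple $(c,d,\ell)$ in which $c$ counts connections, $d$ counts face--non-face inversions, and $\ell$ counts degeneracy--non-degeneracy inversions, weighted so that each of the rewrites above strictly decreases it; inspection of the seven mixed identity cases then yields termination, and confluence to the normal form follows from the monotone sort in phase two.

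For uniqueness, I would argue semantically via \cref{prop:maltsiniotis}. Given a morphism $f = (f_1,\ldots,f_n) \colon [1]^m \to [1]^n$ represented in normal form, each piece of data can be read off directly from $f$: the face indices and signs $(k_\ell, \varepsilon_\ell)$ correspond exactly to the coordinates $j$ at which $f_j$ is a constant, with the value giving $\varepsilon_\ell$; the connection indices $j_1 < \ldots < j_s$ are determined by the cardinalities of the subsets $A$ appearing in the remaining $\max_A$-components (ordered via the constraint $\max A < \min A'$); and the degeneracy indices $i_1 > \ldots > i_r$ correspond to the coordinates of $[1]^m$ not appearing in any $A$. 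This reading-off is independent of any choice of factorization, so two normal forms representing the same monotone map must coincide. A short bookkeeping check, accounting for how indices shift under composition, is needed to confirm that the ordered sequences recovered match exactly those prescribed by the normal form.
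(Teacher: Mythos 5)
The paper offers no proof of this statement beyond a one-line citation of Grandis--Mauri, so your self-contained argument is necessarily a different route: in effect you are reconstructing the proof of the cited result. Your overall plan is sound, and the part you correctly single out as the main obstacle is handled well: the lexicographic measure $(c,d,\ell)$ really does decrease strictly under every mixed rewrite, since the troublesome cases $\gamma_j\partial_{i,1}\mapsto\partial_{i,1}\sigma_i$ and $\sigma_i\gamma_i\mapsto\sigma_i\sigma_i$ each destroy a connection and hence drop the leading coordinate $c$, which absorbs any growth in $\ell$ caused by the freshly created degeneracy. So phase one terminates in a word of block shape (faces)(connections)(degeneracies), and the uniqueness argument via \cref{prop:maltsiniotis} is the right idea.

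Phase two, however, has two genuine soft spots. First, the claim that each pure commutation identity ``strictly reduces the number of out-of-order adjacent pairs'' is not literally true: the swap $\partial_{j,\varepsilon}\partial_{i,\varepsilon'}\mapsto\partial_{i+1,\varepsilon'}\partial_{j,\varepsilon}$ raises an index, and the new letter $\partial_{i+1,\varepsilon'}$ can create a fresh inversion with its left neighbour while the letter $\partial_{j,\varepsilon}$ can create one with its right neighbour, so the naive inversion count can stay constant or even increase. Termination of the sort still holds, but you need a different monovariant (or an induction on the block length, or a purely semantic existence argument that reads the sorted word off the set of constant coordinates). Second, and more seriously, the degeneracy block cannot in general be brought to \emph{strictly} decreasing form: a composite $\sigma_{i_1}\cdots\sigma_{i_r}$ with $i_1>\cdots>i_r$ deletes a set of coordinates whose elements are pairwise at least two apart, so, for example, the unique map $[1]^2\to[1]^0$ is expressible only as $\sigma_1\sigma_1$ or $\sigma_1\sigma_2$, and your bubble sort would cycle between these two words forever without reaching a form with $i_1>i_2$. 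The correct target for the degeneracy block is $i_1\ge\cdots\ge i_r$ (equivalently, strictly increasing indices after reparametrization, which is Grandis--Mauri's convention); with that amendment the sort and the subsequent uniqueness bookkeeping go through. This is really an off-by-one in the statement as printed rather than a flaw in your strategy, but a complete proof has to detect it, and your phase two, taken literally, is exactly where it breaks.
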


\begin{proof}
  This is essentially \cite[Thm.~5.1]{grandis-mauri:cubical-sets-and-their-sites} with the opposite ordering of degeneracy maps, which does not affect the statement.
\end{proof}


We write $\cSet$ for the resulting category of cubical sets, i.e., contravariant functors $\Cube^\op \to \Set$. Following the usual conventions for simplicial sets, we write $\Cube^n$ for the representable cubical sets, represented by $[1]^n$.

The cartesian product of cubical sets is homotopically well-behaved; however, one does not have $\Cube^m \times \Cube^n \iso \Cube^{m+n}$. Thus instead we consider the \emph{geometric product} defined via the left Kan extension of the functor $\Cube \times \Cube \to \cSet$ taking $([1]^m , [1]^n)$ to $\Cube^{m+n}$ along the Yoneda embedding as in
 \begin{ctikzpicture}
   \matrix[diagram]
   {
     |(a)| \Cube \times \Cube & |(c)| \cSet \\
     |(b)| \cSet \times \cSet  &  \\
   };

   \draw[inj] (a) to (b);
   \draw[->] (a) to (c);
   \draw[->] (b) to node[below]  {$\otimes$} (c);
 \end{ctikzpicture}
The geometric product defines a monoidal structure on $\cSet$ and we will work with this, rather than the cartesian structure, throughout the paper.


\section{Co-reflection: construction} \label{2-functors}

The goal of this section is to define the functors forming the proposed co-reflection, i.e., an adjunction $\sSet \rightleftarrows \cSet$ with fully faithful left adjoint. This is a special case of the Grothendieck construction of \cite[\S3]{kapulkin-voevodsky:cubical-straightening}. Specifically, the co-reflector will be given by the Grothendieck construction over the point, i.e., $\int_{\Delta^0}$ in the notation of \cite{kapulkin-voevodsky:cubical-straightening}.

However, the variant of the box category $\Cube$ used in \cite{kapulkin-voevodsky:cubical-straightening} differs from ours, as it is taken to be the full subcategory of posets on objects of the form $[1]^n$. Although the necessary results of \cite[\S2-3]{kapulkin-voevodsky:cubical-straightening} are true for more restrictive choices of the box category such as the one considered here, we prefer not to rely on such results and will instead describe the co-reflection directly.

We will construct an adjoint pair of the form
\[ \Q \colon \sSet \rightleftarrows \cSet : \! \int\text{,}\]
where $\Q$ arises as the left Kan extension of a cosimplicial object $Q^\bullet \colon \Delta \to \cSet$ which we now describe.
For $n \in \bbN$ and $0 < i < n$, there is a canonical map $\partial^n_{i, 1} \colon \Cube^{i-1} \otimes \Cube^{n-i} = \Cube^{n-1} \to \Cube^n$ (i.e., the positive $i^{\text{th}}$-face). This induces a map $\displaystyle{\bigcup\limits_{0 < i < n}}  \Cube^{i-1} \otimes \Cube^{n-i} \to \Cube^n$ and we define $Q^n$ as the pushout:

 \begin{ctikzpicture}
   \matrix[diagram]
   {
     |(a)| \displaystyle{\bigcup\limits_{0 < i < n}}  \Cube^{i-1} \otimes \Cube^{n-i} & |(b)| \Cube^n \\
     |(c)| \displaystyle{\bigcup\limits_{0 < i < n}}  \Cube^{i-1}  & |(d)| Q^n \\
   };

   \draw[->] (a) to (b);
   \draw[->] (a) to (c);
   \draw[->] (b) to (d);
   \draw[->] (c) to (d);
 \end{ctikzpicture}

where the vertical map is induced by projecting off the last $n-i$ entries. Thus $Q^n$ is a quotient of $\Cube^n$. More precisely, we may define an equivalence relation $\sim$ on the set $\Cube^n_m$ of $m$-cubes of the combinatorial $n$-cube as the reflexive closure of:
\[ (f_1, \ldots , f_n) \sim (g_1, \ldots, g_n) \quad \text{iff} \quad \text{there is } j \leq n \text{ such that } f_1 = g_1, \ \ldots,\  f_{j-1} = g_{j-1},\ f_j = g_j = \const_1. \]

\begin{proposition} \label{prop:Qn} \leavevmode
  \begin{enumerate}
    \item\label{prop:Qn-quotient}  The set of $m$-cubes of $Q^n$ is the quotient $\Cube^n_m/\!\sim$.
    \item\label{prop:Qn-unique-representation} In particular, every $m$-cube of $Q^n$ has a unique representation as a sequence
    \[(f_1, f_2, \ldots, f_j, \const_1, \ldots, \const_1)\text{,}\]
    where $f_1, f_2, \ldots, f_j \neq \const_1$.
  \end{enumerate}
\end{proposition}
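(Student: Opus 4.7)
The plan is to compute the pushout defining $Q^n$ levelwise: since colimits in $\cSet = \Set^{\Cube^{\op}}$ are computed pointwise, it suffices to describe $(Q^n)_m$ as a pushout of sets and identify it with $\Cube^n_m/\!\sim$.

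For part \ref{prop:Qn-quotient}, I would first observe that each projection $(\Cube^{i-1} \otimes \Cube^{n-i})_m = \Cube^{n-1}_m \to \Cube^{i-1}_m$ simply discards the last $n-i$ coordinates of a tuple from \cref{prop:maltsiniotis}, and so is surjective. Hence the left vertical map in the pushout square is surjective at level $m$, and $(Q^n)_m$ is a quotient of $\Cube^n_m$. To identify the induced equivalence relation, I would fix $i$ with $0 < i < n$ and $b = (g_1, \ldots, g_{n-1}) \in \Cube^{n-1}_m$: the pushout identifies $\partial_{i,1}(b) = (g_1, \ldots, g_{i-1}, \const_1, g_i, \ldots, g_{n-1}) \in \Cube^n_m$ with $\mathrm{proj}(b) = (g_1, \ldots, g_{i-1}) \in \Cube^{i-1}_m$; letting $b$ range over $(n-1)$-tuples with fixed first $i-1$ coordinates then yields the generating clause of $\sim$ for index $j = i$ (the case $j = n$ degenerates to the identity). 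The main obstacle I expect is in this step: one must verify that zig-zags through different summands of $\bigsqcup_{0<i<n} \Cube^{i-1}$ produce no identifications beyond the single-index generating relations. Because distinct summands of a coproduct are disjoint, every zig-zag must remain within a single summand, so the relation on $\Cube^n_m$ is generated by the clauses above; as these are already symmetric and transitive, their reflexive closure is precisely $\sim$.

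For part \ref{prop:Qn-unique-representation}, existence of the normal form is immediate: given $(f_1, \ldots, f_n)$, let $\ell$ be the length of the longest initial segment with $f_i \neq \const_1$, and iterating the generating clause with index $\ell + 1$ (when $\ell < n$) yields the representative $(f_1, \ldots, f_\ell, \const_1, \ldots, \const_1)$. For uniqueness, suppose two such normal forms of lengths $\ell \leq \ell'$ are related via some index $k$; then $k$ must exceed both $\ell$ and $\ell'$ (else some $\const_1$ would appear inside a non-$\const_1$ segment), so the tuples agree on positions $1, \ldots, \max(\ell, \ell')$. This forces $\ell = \ell'$---otherwise the shorter tuple would have $\const_1$ at position $\ell + 1$ while the longer has a non-$\const_1$ entry there---and hence $f_i = g_i$ for all $i$.
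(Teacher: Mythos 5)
Your proof is correct and takes essentially the same route as the paper's, which simply declares part (1) clear from the definition of $Q^n$ and derives part (2) from it together with \cref{prop:maltsiniotis}; you are supplying the levelwise pushout computation and the zig-zag/transitivity checks that the paper leaves implicit. The one detail worth making explicit in your existence argument is that the candidate normal form $(f_1, \ldots, f_j, \const_1, \ldots, \const_1)$ is itself a valid morphism in $\Cube$, i.e., an element of $\Cube^n_m$ --- this is exactly the point at which the paper invokes \cref{prop:maltsiniotis}, since condition (3) there is preserved when trailing coordinates are replaced by $\const_1$.
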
 

\begin{proof}
 \cref{prop:Qn-quotient} is clear by the definition of $Q^n$. \cref{prop:Qn-unique-representation} follows from \cref{prop:Qn-quotient} and \cref{prop:maltsiniotis}.
\end{proof}

We will write $\pi_n \colon \Cube^n \to Q^n$ for the quotient map.

\begin{examples}
For $n=0, 1, 2$, we can describe/depict $Q^n$'s as follows:
\begin{itemize}
  \item $Q^0 = \Cube^0$;
  \item $Q^1 = \Cube^1$;
  \item $Q^2 =\vcenter{
 \begin{tikzpicture}
   \matrix[diagram]
   {
     |(a)| \bullet & |(b)| \bullet \\
     |(c)| \bullet & |(d)| \bullet \\
   };

   \draw[->] (a) to (b);
   \draw[->] (c) to (a);
   \draw[double equal sign distance] (d) to (b);
   \draw[->] (c) to (d);
 \end{tikzpicture} }$.
\end{itemize}
Similarly, $Q^3$ can be obtained as a quotient of $\Cube^3$, contracting one of the squares to a point and one of the remaining squares to a line.
\end{examples} 

\begin{proposition}
  The assignment $[n] \mapsto Q^n$ extends to a cosimplicial object $Q^\bullet \colon \Delta \to \cSet$.
\end{proposition}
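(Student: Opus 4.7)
The plan is to define $Q^\alpha \colon Q^m \to Q^n$ for every morphism $\alpha \colon [m] \to [n]$ in $\Delta$, and then verify functoriality. Since $\Delta$ is generated by the coface maps $d^i \colon [n-1] \to [n]$ and codegeneracy maps $s^i \colon [n+1] \to [n]$ modulo the cosimplicial identities, it suffices to specify the action on these generators and check the identities.

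Using the description of $l$-cubes of $Q^n$ from \cref{prop:Qn}, I would set
\begin{align*}
 Q^{d^0}(f_1, \ldots, f_{n-1}) &= (f_1, \ldots, f_{n-1}, \const_1),\\
 Q^{d^i}(f_1, \ldots, f_{n-1}) &= (f_1, \ldots, f_{n-i}, \const_0, f_{n-i+1}, \ldots, f_{n-1}) \quad (0 < i \leq n),\\
 Q^{s^0}(f_1, \ldots, f_{n+1}) &= (f_1, \ldots, f_n),\\
 Q^{s^i}(f_1, \ldots, f_{n+1}) &= (f_1, \ldots, f_{n-i}, \max(f_{n-i+1}, f_{n-i+2}), f_{n-i+3}, \ldots, f_{n+1}) \quad (0 < i \leq n).
\end{align*}
These formulas are dictated by the requirement that on vertices, $Q^\alpha$ acts as $\alpha$ under the bijection $Q^n_0 \cong [n]$ sending the normal form $(0,\ldots,0,1,\ldots,1)$ with $k$ trailing ones to $k \in [n]$.

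Next, I would verify that each formula indeed defines a map of cubical sets. Two checks are needed: (a) that the output tuple still satisfies the Maltsiniotis ordering condition of \cref{prop:maltsiniotis}, and (b) that the formula descends through the quotient $\pi_m \colon \Cube^m \to Q^m$, i.e., respects the equivalence relation $\sim$. Both are direct case analyses; appending $\const_1$, inserting $\const_0$, or merging two consecutive entries via $\max$ each preserve the ordering condition, and the relation $\sim$ propagates through these operations since $\const_1$ is fixed by $\max$ and unaffected by position shifts.

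Finally, I would verify the cosimplicial identities $d^j d^i = d^i d^{j-1}$ for $i < j$, $s^j s^i = s^i s^{j+1}$ for $i \leq j$, and the three mixed identities involving $s^j d^i$, by substituting the formulas above and comparing the outputs modulo $\sim$. The main obstacle is the index bookkeeping: the cubical indexing runs \emph{reversed} to the simplicial one (so $d^i$ inserts at position $n-i+1$ rather than $i$), which demands care in tracking positions. However, no new conceptual input is required; once the formulas are in place, each identity is a routine calculation, often closing only up to $\sim$ rather than on the nose.
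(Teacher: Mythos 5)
Your explicit tuple formulas are exactly the maps the paper uses: $Q^{d^0}$ is induced by the positive face $\partial_{n,1}$, $Q^{d^i}$ by $\partial_{n-i+1,0}$, $Q^{s^0}$ by $\sigma_{n+1}$, and $Q^{s^i}$ by the connection $\gamma_{n+1-i}$, and like the paper you reduce the cosimplicial identities to the co-cubical identities together with the equivalence relation $\sim$ (which, as you note, is genuinely needed at index $0$, e.g.\ $\partial_{n+1,0}\partial_{n,1} \sim \partial_{n+1,1}\partial_{n,1}$). This is the same approach as the paper's proof, merely written out at the level of representing tuples rather than of generating morphisms of $\Cube$.
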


\begin{proof}
The remaining face maps $\Cube^{n-1} \to \Cube^n$ (that is, $\partial^n_{n,1}$ and $\partial^n_{i, 0}$ for $i = 1, \ldots, n$), the last degeneracy $\sigma_n \colon \Cube^n \to \Cube^{n-1}$, and the connections $\gamma_j \colon \Cube^n \to \Cube^{n-1}$ descend to maps between the corresponding $Q^n$'s, yielding a co-simplicial object $Q^\bullet \colon \Delta \to \cSet$. This correspondence is as follows:

$\begin{array}{l|ccccccc}
\text{a map } Q^{n-1} \to Q^n & 
0^\text{th} \text{ face} & 1^\text{st} \text{ face} & 2^\text{nd} \text{ face} & 
\cdots & j^\text{th} \text{ face} & \cdots & n^\text{th} \text{ face} \\ \hline

\text{is induced by a map } \Cube^{n-1} \to \Cube^n & 
\partial_{n, 1} & \partial_{n, 0} & \partial_{n-1,0} &
\cdots & \partial_{n-j+1, 0} & \cdots & \partial_{1, 0} \\
& &&&&&& \\
\text{a map } Q^n \to Q^{n-1} & 
0^\text{th} \text{ deg.} & 1^\text{st} \text{ deg.} & 2^\text{nd} \text{ deg.} &
\cdots & j^\text{th} \text{ deg.} & \cdots & (n-1)^\text{st} \text{ deg.} \\ \hline

\text{is induced by a map } \Cube^n \to \Cube^{n-1} & 
\sigma_n & \gamma_{n-1}  & \gamma_{n-2} &
\cdots & \gamma_{n-j} & \cdots & \gamma_1
\end{array}$

The verification that these indeed obey the co-simplicial identities (i.e., form a co-simplicial object) is straightforward using the co-cubical identities and the equivalence relations defining the $Q^n$'s.
For instance, the co-simplicial identity $\partial_{1} \partial_0 = \partial_0 \partial_0$ follows from
\[
	\partial_1 \partial_0 := \partial_{n+1,0} \partial_{n,1} \sim \partial_{n+1,1} \partial_{n,1}  =: \partial_0 \partial_0\text{,}
\]
whereas the co-simplicial identities away from index $0$ do not require the equivalence relation defining $Q^n$.
\end{proof}

\begin{remark}
The other degeneracy maps, (i.e., $\sigma_i$ for $i = 1, \ldots, n-1$) do not descend to maps between $Q^n$'s, since they do not respect the equivalence relation $\sim$ used in the definition of $Q^n$.
\end{remark}

\begin{lemma} \label{lem:Qff}
	$Q ^\bullet\colon \Delta \to \cSet$ is full and faithful.
\end{lemma}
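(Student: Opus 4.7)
The plan is to construct a two-sided inverse to $Q \colon \Delta([m],[n]) \to \cSet(Q^m, Q^n)$ via restriction to vertex sets. By \cref{prop:Qn-quotient}, the vertex set $Q^n_0$ consists of equivalence classes of tuples in $\{0,1\}^n$ under $\sim$, and inspection shows there are exactly $n+1$ such classes: the class of $(0, \ldots, 0)$ and, for each $j \in \{1, \ldots, n\}$, the class containing the tuple whose first $1$ occurs at position $n - j + 1$. I would label these $0, 1, \ldots, n$ accordingly, then verify using the table of basic cosimplicial maps in the proof of the previous proposition that $Q^\bullet$ sends the $j$-th vertex of $\Delta^n$ to the vertex labeled $j$ of $Q^n$. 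This identifies the composite
\[ \Delta([m],[n]) \xrightarrow{Q} \cSet(Q^m, Q^n) \xrightarrow{\rho} \Set([m], [n]) \]
(with $\rho$ restriction-to-vertices) as the canonical inclusion of order-preserving functions into all functions; faithfulness is then immediate.

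For fullness, given $F \in \cSet(Q^m, Q^n)$, I would set $\varphi \coloneqq \rho(F)$ and show (a) $\varphi$ is order-preserving, and (b) $F = Q^\varphi$. For (a), I would analyze the non-degenerate $1$-cubes of $Q^n$. By \cref{prop:Qn-unique-representation} and \cref{prop:maltsiniotis}, each such $1$-cube has a unique normal form $(f_1, \ldots, f_j, \const_1, \ldots, \const_1)$ with $f_l \in \{\const_0, \id\}$ for $l \leq j$ and exactly one $\id$-entry, at some position $k^* \leq j$. Substituting $0$ for $\id$ gives the source, which has first $1$ at position $j+1$, corresponding to the vertex labeled $n - j$; substituting $1$ gives the target, which has first $1$ at position $k^*$, corresponding to vertex $n - k^* + 1 > n - j$. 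Hence non-degenerate $1$-cubes of $Q^n$ always go from a smaller vertex label to a larger one. Conversely, for any $i \leq i'$ in $[m]$ one constructs a $1$-cube of $Q^m$ from vertex $i$ to vertex $i'$ (a degenerate one when $i = i'$, otherwise the above form with $j = m - i$ and $k^* = m - i' + 1$); its image under $F$ forces $\varphi(i) \leq \varphi(i')$.

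Part (b) is the main technical step. By Yoneda, $F$ and $Q^\varphi$ are determined by the $m$-cubes $\alpha_F \coloneqq F \circ \pi_m$ and $\alpha_\varphi \coloneqq Q^\varphi \circ \pi_m$ in $Q^n_m$, so it suffices to show $\alpha_F = \alpha_\varphi$. I would compute $\alpha_\varphi$ explicitly from a face-degeneracy decomposition of $\varphi$, obtaining a normal-form tuple $(g_1, \ldots, g_n)$ whose entries are $\const_0$, $\const_1$, or $\max_{A_l}$ with the $A_l$'s explicit in terms of $\varphi$. For $\alpha_F$, the universal property of the pushout defining $Q^m$ imposes a descent condition: for each $0 < i < m$, the positive $i$-th face of $\alpha_F$ in $Q^n_{m-1}$ must factor through the projection $\Cube^{m-1} \to \Cube^{i-1}$ modulo $\sim$. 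Combining this with the vertex values dictated by $\varphi$ and the Maltsiniotis ordering on the non-constant entries, I would pin down each $g_l$ uniquely and match the computation for $\alpha_\varphi$. The main obstacle is the bookkeeping in this last step: since the descent condition holds only modulo $\sim$, interactions between $\const_1$-entries and $\max_{A_l}$-entries at different positions must be carefully tracked to extract a clean uniqueness statement.
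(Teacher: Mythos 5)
Your faithfulness argument and step (a) of fullness are correct and complete, and your overall route genuinely differs from the paper's: the paper identifies which cubical maps $\Cube^m \to \Cube^n$ descend to maps $Q^m \to Q^n$ (using the table of generators and the normal form of \cref{normal-form}) and checks that these are exactly the composites of maps arising from $\Delta$, whereas you attempt to reconstruct a given $F \colon Q^m \to Q^n$ from its restriction to vertices. The vertex labelling, the identification of $\rho \circ Q$ with the inclusion of monotone maps, and the analysis of non-degenerate $1$-cubes (hence order-preservation of $\varphi = \rho(F)$) all check out.

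The problem is step (b), which is the entire content of fullness: you do not prove it, and you explicitly flag its key claim --- that the descent condition together with the vertex values pins down each entry $g_l$ of $\alpha_F$ uniquely --- as an unresolved ``main obstacle.'' That claim is precisely where the work lies, and it is not routine bookkeeping. Vertex data alone does not determine an $m$-cube of $Q^n$: already for $m = 2$, $n = 1$, the $2$-cubes $\max_{\{2\}}$ and $\max_{\{1,2\}}$ of $\Cube^1$ take the same values on the canonical vertex representatives $(0,0)$, $(0,1)$, $(1,1)$ of $Q^2$, yet only the second descends to a map $Q^2 \to Q^1$. So the uniqueness you need must be extracted from the interaction between the descent condition (which, as you note, only holds modulo $\sim$) and the ordering constraint $\max A < \min A'$ of \cref{prop:maltsiniotis}, and you give no argument for how that interaction forces a unique normal form $(g_1, \ldots, g_n)$, nor do you carry out the matching computation for $\alpha_\varphi$. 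The strategy looks salvageable, but as written, fullness is asserted rather than proved.
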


\begin{proof}
  Using the above characterization of maps between $Q^n$'s, one easily checks that the cubical maps that descend to maps $Q^m \to Q^n$ are exactly those that can be written as composites of maps arising from $\Delta$.
\end{proof}

For $X \in \cSet$, define $\int X \iso \cSet(Q^\bullet, X)$. This gives a functor $\int \colon \cSet \to \sSet$ whose left adjoint, denoted $\Q$, is given by the left Kan extension of $Q^\bullet$ along the Yoneda embedding $\Delta \into \sSet$.

\begin{remark}
  Although it is non-obvious, the functor $\Q \colon \sSet \to \cSet$ does not preserve products. In general, the map $\Q(A \times B) \to \Q A \times \Q B$ is a monomorphism. However already in the case of $A = B = \Delta^1$, it is not an isomorphism.
\end{remark}


\section{Co-reflection: proof} \label{3-coreflection}

In this section, we show that the unit $\eta$ of the adjunction $\Q \dashv \int$ is a natural isomorphism, establishing $\sSet$ as a co-reflective subcategory of $\cSet$ (cf. \cref{thm:coreflection}).
We begin with a very general criterion for pushouts.

\begin{lemma}\label{lem:pushout-episquare}
	In any category, suppose we have the following commuting diagram
	\[
		\begin{tikzcd}[sep = large]
			B \ar[r, "s_1"] \ar[d, "p_3"', two heads] \ar[rr, bend left, equals] & A \ar[r, "p_1", two heads] \ar[d, "p_2"', two heads] & B \ar[d, "p_3", two heads]
			\\
			D \ar[r, "s_4"] & C \ar[r, "p_4", two heads] & D
		\end{tikzcd}
	\]
	where all $p_i$'s are epimorphisms. 
	Then the right-hand square is a pushout square.
\end{lemma}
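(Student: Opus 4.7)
The plan is to exploit the two retraction-type identities hidden in the diagram. The top bent arrow gives $p_1 s_1 = \id_B$ immediately, and I would derive a symmetric fact on the bottom: chasing through the two commuting squares, $p_4 s_4 p_3 = p_4 p_2 s_1 = p_3 p_1 s_1 = p_3$, and since $p_3$ is epi, $p_4 s_4 = \id_D$. So both $s_1$ and $s_4$ are sections of the corresponding $p$'s, even though the statement only explicitly records one of them.

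Next I would verify the universal property of the right-hand square. Given a test cocone, i.e.\ maps $f \colon B \to X$ and $g \colon C \to X$ with $f p_1 = g p_2$, I would define the candidate $h \colon D \to X$ by $h := g s_4$. The two required identities are then:
\begin{itemize}
\item $h p_3 = f$: compute $h p_3 = g s_4 p_3 = g p_2 s_1 = f p_1 s_1 = f$, using commutativity of the left square and $p_1 s_1 = \id_B$.
\item $h p_4 = g$: since $p_2$ is epi, it suffices to check after precomposing with $p_2$; then $h p_4 p_2 = g s_4 p_3 p_1 = f p_1 = g p_2$, using commutativity of the right square followed by the identity just established.
\end{itemize}

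For uniqueness, any $h' \colon D \to X$ satisfying $h' p_4 = g$ must coincide with $h$ because $p_4$ is epi. This concludes the argument, and I do not anticipate any serious obstacle: the whole proof is a short diagram chase in which each step uses exactly one of the hypotheses (the commuting squares, the top section identity, and each epimorphism in turn). The only mildly subtle point is realizing at the outset that the hypothesis forces $s_4$ to be a section of $p_4$ as well, which is what unlocks the definition of $h$.
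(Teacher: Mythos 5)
Your proof is correct and follows essentially the same route as the paper's: deduce that $s_4$ is a section of $p_4$, take $h = g s_4$ as the candidate mediating map, verify both triangles by the same diagram chase (using commutativity of the squares, $p_1 s_1 = \id$, and the epimorphism $p_2$), and get uniqueness from $p_4$ being epi. No gaps.
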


\begin{proof}
	Note that $s_1$ being a section of $p_1$ implies that $s_4$ is a section of $p_4$ as well.
	Consider the commutative diagram of solid arrows:
	\[
		\begin{tikzcd}[sep = large]
			B \ar[r, "s_1"] \ar[d, "p_3"', two heads] \ar[rr, bend left, equals] & A \ar[r, "p_1", two heads] \ar[d, "p_2"', two heads] & B \ar[d, "p_3", two heads] \ar[ddr, bend left, "x"]
			\\
			D \ar[r, "s_4"] & C \ar[r, "p_4", two heads] \ar[drr, bend right, "y"'] & D \ar[dr, dashed, "y\,s_4"']
			\\
			& & & X
		\end{tikzcd}
	\]
	Then $y\,s_4\,p_3 = x$, so $y\,p_2 = x\,p_1 = y\,s_4\,p_3\,p_1 = y\,s_4\,p_4\,p_2$.
	Since $p_2$ is an epimorphism, we obtain $y = y\,s_4\,p_4$, so the diagram with the dashed arrow also commutes.
	Since the map $p_3\,p_1 = p_4\,p_2$ is an epimorphism, the solution $y\, s_4$ is unique.
\end{proof}


The next three lemmas deal with the combinatorics of cubical sets.

Fix subsets $A, B \subseteq \{1,2, \ldots, k\}$. 
Let $m = k - |A|, n = k - |B|$, and $\ell = k - |A \cup B|$.
Write $\sigma_A$ for the composite of degeneracies $\sigma_{i_1} \dots \sigma_{i_m} \colon \Cube^k \to \Cube^m$ for $i_j \in A$, and $\partial_A$ for the positive face map $\Cube^m \to \Cube^k$ that is a section of $\sigma_A$, and similarly for other subsets.
All indices will be with respect to the ambient set $\{1, 2, \dots, k\}$, so a $p$-cube in $\Cube^m$  will be denoted $(f_{i_1}, f_{i_2}, \ldots, f_{i_m})$ where $i_1, \ldots, i_m \notin A$.
 
\begin{lemma} \label{lem:pushout-cube}
	The following diagram is a pushout:
	\[
		\begin{tikzcd}[sep = large]
			\Cube^k \ar[r, "\sigma_A", two heads] \ar[d, "\sigma_B"', two heads] & \Cube^m \ar[d, "\sigma_{B\setminus A}", two heads]
			\\
			\Cube^n \ar[r, "\sigma_{A \setminus B}", two heads] & \Cube^\ell
		\end{tikzcd}
	\] 
\end{lemma}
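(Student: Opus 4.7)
The plan is to reduce to the preceding \cref{lem:pushout-episquare} by exhibiting suitable sections of the horizontal epimorphisms, and then to verify two identities in $\Cube$.

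First I would observe that all four maps $\sigma_A, \sigma_B, \sigma_{A \setminus B}, \sigma_{B \setminus A}$ in the square are split epimorphisms, since each is a composite of degeneracies and each degeneracy $\sigma_i$ is split by the positive face map $\partial_{i,1}$. In particular, taking $s_1 := \partial_A \colon \Cube^m \to \Cube^k$ gives a section of $\sigma_A$, and $s_4 := \partial_{A \setminus B} \colon \Cube^\ell \to \Cube^n$ gives a section of $\sigma_{A \setminus B}$. Together with the square of the statement, these sections assemble into a diagram of the shape required by \cref{lem:pushout-episquare}.

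It then remains to check the two commutativity conditions demanded by \cref{lem:pushout-episquare}, namely
\[ \sigma_{B \setminus A} \circ \sigma_A \;=\; \sigma_{A \setminus B} \circ \sigma_B \colon \Cube^k \to \Cube^\ell \]
and
\[ \sigma_B \circ \partial_A \;=\; \partial_{A \setminus B} \circ \sigma_{B \setminus A} \colon \Cube^m \to \Cube^n. \]
For the first, both composites are the projection of $[1]^k$ onto the coordinates indexed by $\{1, \ldots, k\} \setminus (A \cup B)$, which I would verify via \cref{prop:maltsiniotis} (equivalently, by repeated application of $\sigma_j \sigma_i = \sigma_i \sigma_{j+1}$). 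For the second, both composites take a tuple $(f_i)_{i \notin A}$ to the tuple indexed by $\{1, \ldots, k\} \setminus B$ obtained by deleting the entries at positions in $B \setminus A$ and inserting $\const_1$ at positions in $A \setminus B$; this is again a coordinate-by-coordinate check via \cref{prop:maltsiniotis}, or alternatively an iterated application of $\sigma_j \partial_{i, \varepsilon} = \partial_{i-1, \varepsilon} \sigma_j$ for $j < i$.

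The main obstacle I anticipate is purely notational: unpacking the composites $\sigma_A, \partial_A$, etc.\ into the strictly-ordered normal form of \cref{normal-form} and bookkeeping the index shifts as face maps pass through degeneracies. Once the two identities are established, \cref{lem:pushout-episquare} immediately yields the desired pushout.
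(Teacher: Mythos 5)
Your proposal is correct and follows the paper's proof exactly: the paper likewise takes the sections to be the positive face maps $\partial_A$ and $\partial_{A \setminus B}$ and invokes the cubical identities to verify the hypotheses of \cref{lem:pushout-episquare}. Your version merely spells out the two commutativity checks that the paper leaves implicit, and those checks are right.
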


\begin{proof}
	Take the sections to be the positive face maps $\partial_A$ and $\partial_{A \setminus B}$.
	The cubical identities ensure that the conditions of \cref{lem:pushout-episquare} are satisfied.
\end{proof}

Keeping $A$ and $B$ as before, recall the symmetric difference $A \triangle B := (A \setminus B) \cup (B \setminus A)$.
Let 
\[ 
	C =	\left\{
				\begin{array}{ll}
						\big\{\min A \triangle B, \ldots, k \big\} \cup A \cup B
 & \text{if } A \neq B \text{;} \\
					A & \text{otherwise}  \text{;}
				\end{array}
			\right.	
\] 
and let $r = k -|C|$.
By construction, the degeneracy $\sigma_{C\setminus A} \colon \Cube^m \to \Cube^r$ descends to an epimorphism $\bar{\sigma}_{C \setminus A} \colon Q^m \to Q^r$, and the positive face map $\partial_{C \setminus A}$ descends to a section  $\bar{\partial}_{C \setminus A}$ of  $\sigma_{C\setminus A}$.
Similarly, we have an epimorphism $\bar{\sigma}_{C \setminus B} \colon Q^n \to Q^r$ with a section $\bar{\partial}_{C \setminus B}$. 

\begin{lemma} \label{lem:pushout-Q}
	The following diagram is a pushout:
	\[
		\begin{tikzcd}[sep = large]
			\Cube^k \ar[r, "\pi_m \sigma_A", two heads] \ar[d, "\pi_n \sigma_B"', two heads] & Q^m \ar[d, "\bar{\sigma}_{C \setminus A}", two heads]
			\\
			Q^n \ar[r, "\bar{\sigma}_{C \setminus B}", two heads] & Q^r
		\end{tikzcd}
	\]	
\end{lemma}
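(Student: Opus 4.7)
The plan is to verify the pushout property directly, since an attempt to apply \cref{lem:pushout-episquare} fails: a close look at representables via \cref{prop:maltsiniotis,prop:Qn} shows that the epimorphism $\pi_m\sigma_A\colon \Cube^k \to Q^m$ does not admit a section in $\cSet$ for $m \geq 2$.

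Given $X \in \cSet$ and maps $u\colon Q^m \to X$, $v\colon Q^n \to X$ with $u\pi_m\sigma_A = v\pi_n\sigma_B$, the composites $u\pi_m$ and $v\pi_n$ form a compatible cocone on the square of \cref{lem:pushout-cube}, yielding a unique $\tilde w\colon \Cube^\ell \to X$ with $\tilde w\,\sigma_{(A\cup B)\setminus A} = u\pi_m$ and $\tilde w\,\sigma_{(A\cup B)\setminus B} = v\pi_n$. Let $\phi := \pi_r\,\sigma_{C\setminus(A\cup B)}\colon \Cube^\ell \to Q^r$, an epimorphism. The heart of the proof is to show that $\tilde w$ factors through $\phi$; the induced $w\colon Q^r \to X$ will then satisfy $w\bar{\sigma}_{C\setminus A} = u$ and $w\bar{\sigma}_{C\setminus B} = v$ by construction, and is unique because $\bar{\sigma}_{C\setminus A}$ is an epimorphism.

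The hardest step is establishing this factorization. By \cref{prop:Qn}, two $p$-cubes $y, y'$ of $\Cube^\ell$ satisfy $\phi(y) = \phi(y')$ iff their restrictions to the coordinates outside $C$ agree up to some index and then both carry $\const_1$. Writing $j_0 := \min A\triangle B$, assume without loss of generality that $j_0 \in B\setminus A$. Lifting $y$ along the positive face map $\partial_{(A\cup B)\setminus A}$ inserts $\const_1$ at every coordinate in $B\setminus A$, and in particular at $j_0$. Combined with the fiber condition, this forces the lifts of $y$ and $y'$ to be $\sim$-equivalent in $Q^m$: either $y$ and $y'$ already both carry $\const_1$ at a position below $j_0$, or the inserted $\const_1$ at $j_0$ itself serves as the common $\const_1$ witnessing the equivalence. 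Hence $u\pi_m$ sends the two lifts to the same element, so $\tilde w(y) = \tilde w(y')$; the case $j_0 \in A\setminus B$ is symmetric, using $\partial_{(A\cup B)\setminus B}$ and $v\pi_n$.
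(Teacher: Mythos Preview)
Your argument is correct, though you misjudge the role of \cref{lem:pushout-episquare}. You are right that $\pi_m\sigma_A$ has no section in $\cSet$ for $m\geq 2$; the paper works around this by verifying the pushout \emph{pointwise} in $\Set$. For each $p$, the canonical representatives of \cref{prop:Qn} give a set-theoretic section $\rho_n\colon Q^n_p \to \Cube^n_p$ of $\pi_n$, and hence a section $\hat\partial_B := \partial_B\,\rho_n$ of $\pi_n\sigma_B$ at the level of $p$-cubes (not a map of cubical sets). \cref{lem:pushout-episquare} is then applied in $\Set$, and the proof reduces to checking that the square formed by $\hat\partial_B$, $\bar\partial_{C\setminus A}$, $\pi_m\sigma_A$, and $\bar\sigma_{C\setminus B}$ commutes.

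Your route is genuinely different: you first invoke \cref{lem:pushout-cube} in $\cSet$ to obtain $\tilde w\colon \Cube^\ell \to X$, and then verify that $\tilde w$ is constant on the fibres of $\phi$. This avoids the mild awkwardness of sections that are not morphisms of cubical sets, at the cost of the explicit factorization argument through $\Cube^\ell$. The combinatorial core is the same in both approaches: one ends up checking a $\sim$-equivalence in $Q^m$, witnessed at or before the index $j_0 = \min A\triangle B$. One small omission: you should separately dispose of the case $A = B$ (where $j_0$ is undefined), though this is trivial since then $Q^m = Q^n = Q^r$ and the right-hand maps are identities.
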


\begin{proof}
	If $A = B$, then $\pi_m \sigma_A = \pi_n \sigma_B$, and $\bar{\sigma}_{C \setminus A} = \bar{\sigma}_{C \setminus B}$ is the identity on $Q^m = Q^n = Q^r$, so the diagram is a pushout.

	If $A \neq B$, we may assume without loss of generality that $\min A \triangle B \in B \setminus A$.
	Since pushouts in $\cSet$ are computed pointwise, it suffices to show that following diagram is a pushout for all $p$, where we use the same notation for the induced maps of $p$-cubes:
	\[
		\begin{tikzcd}[sep = large]
			\Cube^k_p \ar[r, "\pi_m \sigma_A", two heads] \ar[d, "\pi_n \sigma_B"', two heads] & Q^m_p \ar[d, "\bar{\sigma}_{C \setminus A}", two heads]
			\\
			Q^n_p \ar[r, "\bar{\sigma}_{C \setminus B}", two heads] & Q^r_p
		\end{tikzcd}
	\]
	By \cref{prop:Qn}, each element in $Q^n_p$ is of the form 
	\[
		f = (f_{i_1}, f_{i_2}, \ldots, f_{i_j}, \const_1, \ldots, \const_1)
	\]
	where $f_{i_\ell} \neq \const_1$ if $\ell \leq j$.
	Let $\rho_n \colon Q^n_p \to \Cube^n_p$ denote the function sending $f \in Q^n_p$ to itself in $\Cube^n_p$.
	This is a section of $\pi_n \colon \Cube^n_p \to Q^n_p$, so the composite 
	\[
		\begin{tikzcd}
			\hat{\partial}_B : Q^n_p \ar[r, "\rho_n"] & \Cube^n_p \ar[r, "\partial_{B}"] & \Cube^k_p
		\end{tikzcd}
	\]
	is a section of $\pi_n \sigma_B \colon \Cube^k_p \to Q^n_p$.
	Note that $\rho_n$ and $\hat{\partial}_B$ do not arise from maps of cubical sets.

	By \cref{lem:pushout-episquare}, it suffices to verify that the following diagram commutes:
	\[
		\begin{tikzcd}[sep = large]
			Q^n_p \ar[d, "\bar{\sigma}_{C \setminus B}"', two heads] \ar[r, "\hat{\partial}_B"] & \Cube^k_p \ar[d, "\pi_m \sigma_A", two heads] 
			\\
			Q^r_p \ar[r, "\bar{\partial}_{C \setminus A}"]	& Q^m_p
		\end{tikzcd}
	\]
	Let $f$ be a $p$-cube in $Q^n_p$, and let $g =   \pi_m\, \sigma_{A} \, \hat{\partial}_B \, f$ and $h = \bar{\partial}_{C \setminus A}\, \bar{\sigma}_{C \setminus B}\, f$ in $Q^m_p$. 
	Then
	\[
		g_i = \left\{
				\begin{array}{ll}
					 f_i & \text{if } i \notin C \text{;} \\
					 \const_1 & \text{otherwise}  \text{;}
				\end{array}
			\right.
		\quad \quad \quad \quad
		h_i = \left\{
				\begin{array}{ll}
					f_i & \text{if } i \notin A \cup B \text{;} \\
					\const_1 & \text{otherwise}  \text{.}
				\end{array}
			\right.		
	\]	
	For $i \notin A$ such that $i  < \min A \triangle B$, we have $i \notin C \supseteq A \cup B$, so $g_i = h_i = f_i$.
	For $i = \min A \triangle B$, which is in $B \setminus A$ by assumption, we have $i \in A \cup B \subseteq C$, so $g_i = h_i = \const_1$.
	But this identifies $g$ with $h$ in $Q^m_p$, thus the diagram commutes.
\end{proof}

\begin{lemma} \label{cor:pushout-Q-any}
	Any square of the form
	\[
		\begin{tikzcd}
			\Cube^k \ar[r, ] \ar[d, ] & Q^m \ar[d,]
			\\
			Q^n \ar[r, ] & X
		\end{tikzcd}
	\]
	can be factored as
	\[
		\begin{tikzcd}
			\Cube^k \ar[r, , two heads] \ar[d,, two heads] \ar[dr, phantom, very near end, "\ulcorner"] & Q^{m'} \ar[d, , two heads] \ar[r, ] & Q^m \ar[dd, ]
			\\
			Q^{n'} \ar[r, , two heads] \ar[d, ] & Q^r \ar[dr] & 
			\\
			Q^n \ar[rr,] & & X
		\end{tikzcd}
	\]
	where the pushout square consists of maps induced by degeneracies.
\end{lemma}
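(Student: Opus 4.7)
The plan is to factor each of the two maps $\Cube^k \to Q^m$ and $\Cube^k \to Q^n$ individually through a degeneracy-induced epimorphism, of the respective forms $\pi_{m'}\sigma_A$ and $\pi_{n'}\sigma_B$ for suitable subsets $A, B \subseteq \{1,\ldots,k\}$, and then invoke \cref{lem:pushout-Q} on these choices to produce the pushout square. Once the individual factorizations $\Cube^k \xrightarrow{\pi_{m'}\sigma_A} Q^{m'} \xrightarrow{\phi} Q^m$ and $\Cube^k \xrightarrow{\pi_{n'}\sigma_B} Q^{n'} \xrightarrow{\phi'} Q^n$ are in hand, the commutativity of the given square forces the composites $Q^{m'} \to Q^m \to X$ and $Q^{n'} \to Q^n \to X$ to agree on $\Cube^k$, and the universal property of the pushout then produces the missing map $Q^r \to X$.

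To construct the factorization of $u \colon \Cube^k \to Q^m$, I would first write the corresponding element of $Q^m_k$ in the unique normal form of \cref{prop:Qn} as $f = (f_1, \ldots, f_j, \const_1, \ldots, \const_1)$ with $f_l \neq \const_1$ for $l \leq j$. By \cref{prop:maltsiniotis}, each such $f_l$ is either $\const_0$ or $\max_{A_l}$ for some non-empty $A_l \subseteq \{1,\ldots,k\}$, and $\max A_l < \min A_{l'}$ whenever $l < l'$ are both of max-type. I would then set $A := \{1,\ldots,k\} \setminus \bigcup_{l \leq j} A_l$ and $m' := k - |A|$, and factor $f = \bar f \circ \sigma_A$ in $\Cube$, where $\bar f \colon \Cube^{m'} \to \Cube^m$ has components $\bar f_l = \max_{A'_l}$ (or $\const_0$) with each $A'_l$ obtained by re-indexing $A_l$ along the order-preserving bijection $\{1,\ldots,k\}\setminus A \to \{1,\ldots,m'\}$. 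By construction, the non-empty $A'_l$'s partition $\{1,\ldots,m'\}$ into ordered consecutive intervals.

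The hard part is to verify that $\pi_m \bar f$ respects $\sim$ on $\Cube^{m'}$ and hence descends to $\phi \colon Q^{m'} \to Q^m$ with $u = \phi \circ \pi_{m'}\sigma_A$. Given $\psi \sim \psi'$ in $\Cube^{m'}_p$ with $\psi_s = \psi'_s$ for $s < j_0$ and $\psi_{j_0} = \psi'_{j_0} = \const_1$, the partition of $\{1,\ldots,m'\}$ by the $A'_l$'s produces a unique $l_0 \leq j$ with $j_0 \in A'_{l_0}$, and then $\bar f_{l_0}(\psi) = \bar f_{l_0}(\psi') = \const_1$ because $\psi_{j_0} = \const_1$ appears in the defining maximum. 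For every $l < l_0$, either $\bar f_l = \const_0$ or the ordering condition forces $A'_l \subseteq \{1,\ldots,j_0-1\}$, so in both cases $\bar f_l(\psi) = \bar f_l(\psi')$; this yields $\bar f(\psi) \sim \bar f(\psi')$. Running the analogous construction on $v$ yields $B$, $n'$, and $\phi'$, and feeding the pair $(A, B)$ into \cref{lem:pushout-Q} produces the required pushout square. The descent verification is the only substantive step; everything else is a routine application of universal properties.
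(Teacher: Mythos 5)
Your proof is correct and takes essentially the same approach as the paper: factor each leg of the span as a degeneracy $\Cube^k \twoheadrightarrow \Cube^{m'}$ followed by a map that descends to the $Q$'s, then apply \cref{lem:pushout-Q} and the universal property of the pushout. The only difference is that the paper extracts the factorization and the descent from the normal form of \cref{normal-form} (the residual composite of connections and faces descends), whereas you verify descent componentwise via \cref{prop:maltsiniotis}; both are valid.
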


\begin{proof}
	By \cref{normal-form}, any map $\Cube^k \to \Cube^m$ may be factored as a degeneracy $\Cube^k \to \Cube^{m'}$ followed by a map $\Cube^{m'} \to \Cube^m$ which descends to a map $Q^{m'} \to Q^m$.
	Factor $\Cube^k \to \Cube^n$ in a similar fashion, then apply \cref{lem:pushout-Q}.
\end{proof}

Using the above lemma, we can now show that the functor $\Q \colon \sSet \to \cSet$ is faithful. The technical part is contained in the following statement.

\begin{proposition} \label{prop:Q-faithful-on-simp}
  Given $x, y \colon \Delta^n \to X$, if $\Q x = \Q y$, then $x = y$, i.e., $\Q$ induces an injective map $\sSet(\Delta^n, X) \to \cSet(Q^n, \Q X)$.
\end{proposition}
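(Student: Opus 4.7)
Since $\Q$ is defined as the left Kan extension of $Q^\bullet$ along the Yoneda embedding, for each $p$ the set of $p$-cubes of $\Q X$ is the coend
\[
	(\Q X)_p \;\iso\; \Bigl(\coprod_{m,\, s \in X_m} Q^m_p\Bigr) \big/ {\sim},
\]
with $\sim$ generated by $(s \cdot \alpha, c) \sim (s, \alpha_* c)$ for $\alpha \colon [m] \to [m']$ in $\Delta$. In particular $\Q \Delta^n \iso Q^n$, and since $Q^n$ is a quotient of $\Cube^n$ and thus generated as a cubical set by its top cube $\iota_n := \pi_n(\id_{[1]^n})$, the map $\Q x \colon Q^n \to \Q X$ is completely determined by $\Q x(\iota_n) = [(x, \iota_n)] \in (\Q X)_n$. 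The proposition is therefore equivalent to the assertion that the function $X_n \to (\Q X)_n$, $x \mapsto [(x, \iota_n)]$, is injective.

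The strategy is to produce a retraction $r \colon (\Q X)_n \to X_n$. The key input is a rigidity property of $\iota_n$: by \cref{prop:maltsiniotis} together with \cref{prop:Qn}\cref{prop:Qn-unique-representation}, the unique normal-form representative of $\iota_n$ is $(\pi_1, \ldots, \pi_n)$, which uses all $n$ coordinates as singleton $\max$-sets with no $\const_1$-entries. Consequently, for any $\alpha \colon [n] \to [n]$ in $\Delta$, an equation $\alpha_* \iota_n = \iota_n$ in $Q^n_n$ rewrites as $\alpha_* \circ \pi_n = \pi_n$ in $\cSet(\Cube^n, Q^n)$; cancelling the epimorphism $\pi_n$ gives $\alpha_* = \id_{Q^n}$, and by fullness of $Q^\bullet$ (\cref{lem:Qff}) we conclude $\alpha = \id$. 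I would then define $r$ on representatives $(s, c) \in X_m \times Q^m_n$ by first writing $c$ in the normal form of \cref{normal-form}, extracting a unique simplicial factor $\alpha_c \colon [n] \to [m]$ in $\Delta$ such that $c = (\alpha_c)_* \iota_n$ (after absorbing all cubical-but-non-simplicial content into degenerations that the relation $\sim$ identifies), and setting $r(s, c) := s \cdot \alpha_c$; by construction $r(x, \iota_n) = x$.

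The main obstacle is that not every $c$ admits such a simplicial factorization directly, and $r$ must be checked to be well-defined on $\sim$-classes. For both issues, I would invoke \cref{cor:pushout-Q-any}: any representative $(s, c)$ can be moved (via the zigzag relation) to one in which $c$ has the ``simplicial'' form $(\alpha_c)_* \iota_n$, because the factorization provided by \cref{cor:pushout-Q-any} realizes the remaining cubical data as a pushout whose structure maps already descend from $\Delta$. Confluence of two such normalizations — hence well-definedness of $r$ on equivalence classes — reduces to the pushout property of that same lemma combined with the uniqueness of simplicial lifts coming from \cref{lem:Qff}. Once $r$ is well-defined on the quotient, applying it to the identity $[(x, \iota_n)] = [(y, \iota_n)]$ yields $x = y$, which is the desired injectivity.
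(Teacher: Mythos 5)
Your reduction is sound: $\Q \Delta^n \iso Q^n$ is generated by $\iota_n$, so the claim is indeed equivalent to injectivity of the function $X_n \to (\Q X)_n$, $x \mapsto [(x,\iota_n)]$; and the rigidity of $\iota_n$ (its representative $\id_{[1]^n}$ has no $\const_1$ component, so its class in $\Cube^n_n$ is a singleton) is correct and relevant. But the heart of the argument --- the construction of the retraction $r$ and its well-definedness on coend classes --- is exactly where the entire content of the proposition lives, and the one concrete mechanism you offer for it fails. It is not true that every representative $(s,c)$ can be moved by the coend zigzag to one in which $c$ has the simplicial form $(\alpha_c)_*\iota_n$. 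Take $X = \Delta^1$, so $\Q X \iso Q^1 = \Cube^1$ and $(\Q X)_2 = \Cube([1]^2,[1])$ has the five elements $\const_0$, $\const_1$, $\max_{\{1\}}$, $\max_{\{1,2\}}$, $\max_{\{2\}}$; the images of the four $2$-simplices of $\Delta^1$ are exactly the first four (via $\sigma_2$ and $\gamma_1$), so the class of $\max_{\{2\}} = \sigma_1$ has no representative of simplicial form at all --- $\sigma_1$ is precisely a degeneracy that does not descend to the $Q$'s. Hence ``absorbing all cubical-but-non-simplicial content into degenerations that the relation $\sim$ identifies'' cannot work: the coend relation only identifies along maps in the image of $Q^\bullet$, and \cref{cor:pushout-Q-any} is a statement about commuting squares out of $\Cube^k$, not a normalization device for single coend representatives. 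As written, well-definedness of $r$ --- equivalently, the statement that $[(x,\iota_n)] = [(y,\iota_n)]$ forces $x = y$, which is verbatim what is to be proved --- is asserted rather than argued.

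The paper takes a different and much shorter route: skeletal induction on $X$, using \cref{lem:no-lift-Qn} (any map $\Cube^n \to \Q(\partial\Delta^n)$ must factor through an $(n-1)$-dimensional face, so $\pi_n$ does not lift through $\Q(\partial\Delta^n) \to Q^n$). If $x$ is non-degenerate, then $\Q x$ cannot factor through $\Q(\partial \Delta^n)$, which pins $x$ down; if both $x$ and $y$ are degenerate, one descends to a lower skeleton. If you want to keep the retraction strategy, you would instead need to analyse the zigzags emanating from $(x,\iota_n)$ directly (for instance, $\alpha_*c = \iota_n$ forces $\alpha$ to be a split epimorphism in $\Delta$ and $c$ a section of the corresponding cubical map), which amounts to the same combinatorial work by other means. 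A small additional point: the step from $\alpha_* = \id_{Q^n}$ to $\alpha = \id$ uses faithfulness of $Q^\bullet$, not fullness.
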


The proof requires the following lemma.

\begin{lemma} \label{lem:no-lift-Qn}
  There is no map $\Cube^n \to \Q(\partial \Delta^n)$ making the following diagram commute
  \begin{ctikzpicture}
   \matrix[diagram]
   {
     |(a)|  	                               & |(b)| \Cube^n \\
     |(c)| 	\Q(\partial \Delta^n)  & |(d)| Q^n \\
   };

   \draw[->>] (b) to node[right] {$\pi_n$} (d);
   \draw[->]   (c) to (d);
   \draw[->]   (b) to (c);
 \end{ctikzpicture}
\end{lemma}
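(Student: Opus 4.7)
The plan is to rule out the lift by establishing a stronger statement: I claim that $\pi_n(\id_{\Cube^n}) \in Q^n_n$ does not lie in the image of the map $\Q(\partial \Delta^n) \to Q^n$ induced by $\partial \Delta^n \into \Delta^n$. Granting this, if a lift $\Cube^n \to \Q(\partial \Delta^n)$ existed, then the identity $n$-cube would map to some $a \in \Q(\partial \Delta^n)_n$ whose image in $Q^n_n$ would equal $\pi_n(\id)$, a contradiction.

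To prove the stronger claim, I would first use that $\Q$, being a left adjoint, preserves colimits. Writing $\partial \Delta^n$ as the standard coequalizer
\[
  \coprod_{0 \leq i < j \leq n} \Delta^{n-2} \rightrightarrows \coprod_{0 \leq i \leq n} \Delta^{n-1} \to \partial \Delta^n
\]
and applying $\Q$ yields the analogous coequalizer in $\cSet$, which (being a presheaf category) is computed levelwise. Hence $\coprod_i Q^{n-1}_n \to \Q(\partial \Delta^n)_n$ is surjective. Thus every $n$-cube of $\Q(\partial \Delta^n)$ arises from an $n$-cube of some copy of $Q^{n-1}$ via a face inclusion, and its further image in $Q^n$ lies in the image of the corresponding face map of the cosimplicial object $Q^\bullet$. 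Per the table defining $Q^\bullet$, these are exactly the $n+1$ cubical face maps $Q^{n-1} \to Q^n$ induced by $\partial_{n, 1}$ or by $\partial_{i, 0}$ for $i \in \{1, \ldots, n\}$.

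The main technical point is then to show that none of these face maps contains $\pi_n(\id)$ in its image on $n$-cubes. By \cref{prop:Qn-unique-representation}, $\pi_n(\id) \in Q^n_n$ has the unique canonical representative $(\mathrm{pr}_1, \ldots, \mathrm{pr}_n)$, which has no $\const_1$ entry. On the other hand, the image under $\partial_{i, \varepsilon}$ of any $(f_1, \ldots, f_{n-1}) \in \Cube^{n-1}_n$ is the tuple $(f_1, \ldots, f_{i-1}, \const_\varepsilon, f_i, \ldots, f_{n-1})$, whose canonical representative either (a) coincides with itself (if no $f_j$ equals $\const_1$), in which case it contains $\const_\varepsilon$ at position $i$; or (b) agrees with itself up to some first $\const_1$ entry and is padded with $\const_1$'s thereafter. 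In case (a) with $\varepsilon = 0$, the $i$-th entry is $\const_0 \neq \mathrm{pr}_i$; in case (a) with $\varepsilon = 1$ (only possible when $i = n$), the $n$-th entry is $\const_1 \neq \mathrm{pr}_n$; in case (b), the canonical form has a $\const_1$ at some position while $(\mathrm{pr}_1, \ldots, \mathrm{pr}_n)$ does not. Either way, the canonical representatives differ, so the classes are distinct.

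The main obstacle is ensuring the case analysis on canonical representatives is fully exhaustive, which in turn relies on correctly reading off the cubical face maps comprising $Q^\bullet$ and carefully tracking how the relation $\sim$ interacts with the insertion of $\const_0$ or $\const_1$ at a given position. Once this combinatorial verification is done, the colimit-preservation and surjectivity reduction in the first step is purely formal.
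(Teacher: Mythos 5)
Your proof is correct and follows essentially the same route as the paper's (one-line) argument: the paper observes that any such map must factor through an $(n-1)$-dimensional face of $\Q(\partial\Delta^n)$, and your surjectivity reduction plus the combinatorial check that $\pi_n(\id)$ avoids the image of every face map $Q^{n-1}\to Q^n$ is exactly the detail the paper leaves implicit. (Your case analysis could be shortened by noting that, since no component of $\id$ is $\const_1$, its $\sim$-class is a singleton, while every tuple in the image of $\partial_{i,\varepsilon}$ has a constant $i$-th component.)
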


\begin{proof}
  Immediate, since any map $\Cube^n \to \Q(\partial \Delta^n)$ would need to factor through an $(n-1)$-dimensional face.
\end{proof}

\begin{proof}[Proof of \cref{prop:Q-faithful-on-simp}]
  This is proven by skeletal induction with respect to $X$. The conclusion is clear for $n=0$, i.e., when both $x$ and $y$ are points of $X$.
  
  If both $x$ and $y$ are degenerate, then the conclusion follows directly by the inductive hypothesis. Otherwise, if say $x$ is non-degenerate, then the fact that $\Q x = \Q y$ while $x \neq y$ contradicts \cref{lem:no-lift-Qn}.
\end{proof}

\begin{corollary} \label{cor:Q-faithful}
  The functor $\Q \colon \sSet \to \cSet$ is faithful. \qed
\end{corollary}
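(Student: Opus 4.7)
The plan is to derive faithfulness of $\Q$ on arbitrary simplicial maps from the case of maps out of representables, which is exactly \cref{prop:Q-faithful-on-simp}. This is the standard ``Yoneda reduction'' move: since the representables $\Delta^n$ are dense in $\sSet$, any two parallel maps $f, g \colon X \to Y$ agree as soon as $f \circ x = g \circ x$ for every simplex $x \colon \Delta^n \to X$.

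Concretely, suppose $f, g \colon X \to Y$ satisfy $\Q f = \Q g$. For an arbitrary simplex $x \colon \Delta^n \to X$, functoriality of $\Q$ gives
\[
\Q(f \circ x) \; = \; \Q f \circ \Q x \; = \; \Q g \circ \Q x \; = \; \Q(g \circ x)\text{.}
\]
Applying \cref{prop:Q-faithful-on-simp} to the two maps $f \circ x,\, g \circ x \colon \Delta^n \to Y$ yields $f \circ x = g \circ x$. Since this holds for every $n$ and every $x$, the Yoneda lemma forces $f = g$.

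There is no real obstacle here: all the combinatorial content has already been packaged into \cref{lem:no-lift-Qn} and \cref{prop:Q-faithful-on-simp}, and the corollary is simply the observation that a functor out of a presheaf category which is injective on hom-sets out of every representable is faithful.
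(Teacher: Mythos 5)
Your argument is correct and is precisely the deduction the paper leaves implicit by marking the corollary with \qed: faithfulness on maps out of representables (\cref{prop:Q-faithful-on-simp}) plus the Yoneda lemma gives faithfulness in general. Nothing to add.
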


\begin{lemma} \label{lem:eta-iso}
	For each $X \in \sSet$, the unit $\eta_X \colon X \to \int \Q X$ is an isomorphism.
\end{lemma}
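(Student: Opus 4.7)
The plan is to show that for each $n \geq 0$, the map
\[
	(\eta_X)_n \colon X_n \to \cSet(Q^n, \Q X), \quad x \mapsto \Q(x)
\]
is a bijection. Injectivity is immediate from \cref{prop:Q-faithful-on-simp}: if $\Q(x) = \Q(y)$ for $x, y \in X_n$ (viewed as maps $\Delta^n \to X$), then $x = y$.

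For surjectivity, I would proceed by induction on the skeletal filtration of $X$. The base case $X = \emptyset$ is trivial, since $\Q \emptyset = \emptyset$ and both $X_n$ and $\cSet(Q^n, \emptyset)$ are empty. For the inductive step, use the standard pushout
\[
	\begin{tikzcd}
		\coprod_\alpha \partial \Delta^k \ar[r] \ar[d] & \coprod_\alpha \Delta^k \ar[d] \\
		X^{(k-1)} \ar[r] & X^{(k)}
	\end{tikzcd}
\]
describing the attachment of non-degenerate $k$-simplices. Since $\Q$ is a left adjoint, applying it gives a pushout in $\cSet$ describing $\Q X^{(k)}$. Given $g \colon Q^n \to \Q X^{(k)}$, the goal is to produce $x \in X^{(k)}_n$ with $\Q(x) = g$.

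The key step is to use \cref{cor:pushout-Q-any} to decompose $g$ with respect to this pushout: applying the corollary to squares formed by pre-composing $g$ with $\pi_n \colon \Cube^n \to Q^n$ and with the attaching maps of cells, one obtains a canonical factorization through a pushout of degeneracy-type maps between $Q$'s. Combined with \cref{lem:pushout-Q}, this shows that $g$ either factors through $\Q X^{(k-1)}$---in which case the inductive hypothesis provides $x \in X^{(k-1)}_n \subseteq X^{(k)}_n$---or through a single attached $Q^k$---in which case \cref{lem:Qff} supplies a unique simplicial map $\Delta^n \to \Delta^k$, hence an $n$-simplex of $X^{(k)}$.

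The main obstacle is ensuring that $g$ does not pathologically ``straddle'' the boundary between $\Q X^{(k-1)}$ and a newly attached cell, or between two distinct attached cells. Here \cref{lem:pushout-cube,lem:pushout-Q} earn their keep: by exhibiting the relevant diagrams of $Q$'s and $\Cube$'s as genuine pushouts of degeneracy maps, any such straddling map reduces via degeneracies to one landing in the attaching subspace, to which the inductive hypothesis applies.
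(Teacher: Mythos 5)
Your proposal is correct in substance and relies on exactly the same key ingredients as the paper: injectivity of $(\eta_X)_n$ is \cref{prop:Q-faithful-on-simp} (the paper invokes its consequence, \cref{cor:Q-faithful}), and surjectivity is extracted from \cref{lem:pushout-Q}, \cref{cor:pushout-Q-any}, and \cref{lem:Qff}. The one genuine difference is organizational: you wrap surjectivity in a skeletal induction on $X$, whereas the paper argues directly on the colimit $\Q X = \colim_{\Delta^n \to X} Q^n$. Given $\varphi \colon Q^k \to \Q X$, the composite $\varphi\,\pi_k \colon \Cube^k \to \Q X$ automatically factors through a single component $\Q x \colon Q^n \to \Q X$ (an element of a pointwise colimit of sets is represented by an element of some term), and then \cref{cor:pushout-Q-any} together with the positive-face section $\partial \colon Q^r \to Q^{n'}$ of the pushout upgrades this to a factorization of $\varphi$ itself through $Q^n$, after which \cref{lem:Qff} finishes. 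This works for arbitrary $X$ in one stroke, so the dichotomy ``factors through $\Q X^{(k-1)}$ or through a single attached cell'' and the worry about maps straddling the attaching pushout never arise; your induction buys nothing and incurs the extra (omitted) step of passing to the colimit over skeleta for infinite-dimensional $X$, which would need the compactness of $Q^n$ as in \cref{lem:int-comp}. In either version, the step you leave implicit --- how a factorization of $\varphi\,\pi_k$ through a component becomes a factorization of $\varphi$ as a map of cubical sets --- is exactly where the section of the pushout square of \cref{lem:pushout-Q} does its work, and it deserves to be spelled out; in the paper the skeletal induction appears only in the proof of \cref{prop:Q-faithful-on-simp}, i.e., on the injectivity side.
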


\begin{proof}
  By \cref{cor:Q-faithful}, it suffices to give a section of the map $\sSet(\Delta^n, X) \to \cSet(Q^n, \Q X)$.
	

	Given $\varphi \colon Q^k \to \Q X$, we first precompose with $\pi_k \colon \Cube^k \to Q^k$ to obtain $\varphi\, \pi_k \colon \Cube^k \to \Q X$.
	We factor $\varphi \, \pi_k$ through one of the components of the colimit defining $\Q X$ to obtain the following square on the left, then apply \cref{cor:pushout-Q-any} to obtain the square on the right:
	\[
		\begin{tikzcd}
			\Cube^k \ar[d, "\pi_k"', two heads] \ar[r, "f"] & Q^n \ar[d, "\Q x"]
			\\
			Q^k \ar[r, "\varphi"] & \Q X
		\end{tikzcd}
		\quad \quad = \quad \quad
		\begin{tikzcd}
			\Cube^k \ar[r, two heads] \ar[d, two heads] \ar[dr, very near end, phantom, "\ulcorner"] & Q^{n'} \ar[r] \ar[d, two heads]  & Q^n \ar[d, "\Q x"]
			\\
			Q^k \ar[r, two heads]  & Q^{r} \ar[r] & \Q X
		\end{tikzcd}		
	\]
	Taking the positive face map $\partial \colon Q^r \to Q^{n'}$ yields a factorization of $\varphi$ as
	\[
		\begin{tikzcd}
			Q^k \ar[r, two heads] & Q^{r} \ar[r, "\partial"] & Q^{n'} \ar[r] & Q^n \ar[r, "\Q x"] & \Q X
		\end{tikzcd}
	\]
	By \cref{lem:Qff}, the map $Q^k \to Q^n$ is of the form $\Q f$ for some $f \colon \Delta^k \to \Delta^n$.
	We may then factor $xf \colon \Delta^k \to X$ uniquely as a degenerate $g \colon \Delta^k \to \Delta^m$ followed by a non-degenerate $y \colon \Delta^m \to X$, so that $\varphi = \Q y \circ \Q g$.
	Note that this is independent of the choice of $\partial$ or $f$, so that we have a well-defined function $\varphi \mapsto yg$,
which is the desired section.
\end{proof}

This gives the main theorem of this section.

\begin{theorem} \label{thm:coreflection}
  The functors $\Q \adjoint \int$ define a co-reflective inclusion of $\sSet$ into $\cSet$. \qed
\end{theorem}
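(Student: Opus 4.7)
The plan is to invoke the standard categorical criterion that, for an adjunction $L \dashv R$, the left adjoint $L$ is fully faithful (equivalently, exhibits its domain as a co-reflective subcategory of the codomain) if and only if the unit $\eta \colon \mathrm{id} \Rightarrow RL$ is a natural isomorphism. Applied to $\Q \dashv \int$, the only thing I would need to verify is that each component of the unit is an isomorphism, and this is precisely the content of \cref{lem:eta-iso}.

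For the reader's convenience, I would recall why the criterion holds: naturality of $\eta$ is automatic from the adjunction, while an inverse to the map $\Q \colon \sSet(X, Y) \to \cSet(\Q X, \Q Y)$ is given by sending $\varphi \colon \Q X \to \Q Y$ to the composite $X \xrightarrow{\eta_X} \int \Q X \xrightarrow{\int \varphi} \int \Q Y \xrightarrow{\eta_Y^{-1}} Y$, which makes sense as soon as $\eta_Y$ is known to be invertible. Since \cref{lem:eta-iso} establishes exactly this for every $Y \in \sSet$, the theorem follows with no further argument. There is no real obstacle to overcome at this stage; all of the combinatorial work has already been packaged into the preceding lemmas (notably \cref{cor:pushout-Q-any}, \cref{prop:Q-faithful-on-simp}, and \cref{lem:eta-iso}), and this final statement is essentially a reformulation of \cref{lem:eta-iso} in the language of co-reflective subcategories.
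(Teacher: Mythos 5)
Your proposal is correct and matches the paper's treatment: the theorem is stated with an immediate \qed because, exactly as you say, it follows from the standard fact that a left adjoint is fully faithful if and only if the unit is a natural isomorphism, which is the content of \cref{lem:eta-iso}. Your explicit inverse $\varphi \mapsto \eta_Y^{-1} \circ \int\varphi \circ \eta_X$ is the usual verification of that criterion and adds nothing beyond what the paper leaves implicit.
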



\section{Induced model structures} \label{4-induced-model}

Given any model structure on $\sSet$, we declare a map $f$ in $\cSet$ to be:
\begin{itemize}
  \item a \emph{fibration} if $\int f$ is a fibration of simplicial sets;
  \item a \emph{weak equivalence} if $\int f$ is a weak equivalence of simplicial sets;
  \item a \emph{cofibration} if it has the left lifting property with respect to acyclic fibrations, as defined above.
\end{itemize}

If the above three classes of maps define a model structure on $\cSet$, we refer to such a model structure as \emph{right induced} by $\int$. The goal of this section is to prove the following theorem:

\begin{theorem}\label{thm:model-structure-cSet}
  Given any cofibranty generated model structure on $\sSet$ in which every cofibration is a monomorphism, the adjunction $\Q{:}\,\sSet \rightleftarrows \cSet\,{:}\! \int$ right induces a Quillen equivalent model structure on $\cSet$.
\end{theorem}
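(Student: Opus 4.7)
The plan is to apply a right-transfer theorem to the adjunction $\Q \dashv \int$, and then deduce the Quillen equivalence from the fact that $\eta$ is an isomorphism (\cref{lem:eta-iso}).

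\textbf{Setup and accessibility.} I would first check the standing hypotheses of a right-transfer theorem such as that of \cite{hkrs:transfer-thm}, as refined in \cite{garner-kedziorek-riehl}. The category $\cSet$ is a presheaf category, hence locally presentable, so smallness conditions and the small-object argument are available; moreover $\int = \cSet(Q^\bullet, -)$ preserves filtered colimits because each $Q^n$ is built as a finite pushout of representables, hence is a finitely presentable object of $\cSet$. With $I, J$ denoting the generating (acyclic) cofibrations of $\sSet$, the candidate generating sets on $\cSet$ are $\Q I$ and $\Q J$; note that acyclic fibrations and fibrations in $\cSet$ are already characterized as the $\Q I$-injectives and $\Q J$-injectives respectively, by the adjunction.

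\textbf{Acyclicity.} The essential content of the transfer theorem is to show that every map in the saturation of $\Q J$ is sent by $\int$ to a weak equivalence in $\sSet$. First, using the hypothesis that cofibrations in $\sSet$ are monomorphisms together with a skeletal induction applying \cref{prop:Qn}, one verifies that $\Q$ preserves monomorphisms, so that every map in the saturation of $\Q I$ is a monomorphism in $\cSet$. To close the saturation of $\Q J$ inside the $\int$-weak equivalences, I would invoke Quillen's path-object argument with $PY := Y^{\Cube^1}$ taken in the cartesian internal hom of $\cSet$: the endpoint inclusions $\Cube^0 \sqcup \Cube^0 \to \Cube^1$ and the degeneracy $\Cube^1 \to \Cube^0$ induce the required factorization $Y \to PY \to Y \times Y$ of the diagonal, and the connections on $\Cube^1$ provide the contractibility datum making $Y \to PY$ a $\int$-weak equivalence.

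\textbf{Quillen equivalence.} By construction $\int$ creates and reflects both fibrations and weak equivalences, so $(\Q, \int)$ is a Quillen adjunction. For Quillen equivalence, let $X \in \sSet$ be cofibrant, $Y \in \cSet$ fibrant, and $f \colon \Q X \to Y$ with adjunct $f^\flat = \int f \circ \eta_X$. Since $\eta_X$ is an isomorphism by \cref{lem:eta-iso}, $f^\flat$ is a weak equivalence if and only if $\int f$ is, which by definition is equivalent to $f$ being a weak equivalence in $\cSet$.

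The main obstacle is the acyclicity step. Because $\Q$ does not preserve products (as noted in the remark following \cref{lem:Qff}), the cubical internal hom $Y^{\Cube^1}$ does not transport cleanly to $(\int Y)^{\Delta^1}$ under $\int$, so verifying that the cubical path object really does detect $\int$-weak equivalences requires a careful combinatorial analysis of the interaction between $\int$ and internal homs, making essential use of connections and of the explicit description of $Q^\bullet$ from \cref{2-functors}.
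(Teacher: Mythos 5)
Your overall architecture (accessibility/smallness, then acyclicity of $\mathrm{Sat}(\Q J_\Delta)$, then the Quillen equivalence from $\eta$ being an isomorphism) matches the paper, and your Quillen-equivalence step is essentially the paper's argument. The problem is the acyclicity step, which is the technical heart of the theorem and which your proposal does not actually establish. You propose Quillen's path-object argument with $PY = Y^{\Cube^1}$, but you then concede yourself that you cannot control $\int(Y^{\Cube^1})$; this is not a minor loose end. For the transferred structure one would need $\int(Y^{\Cube^1} \to Y \times Y)$ to be a fibration in the \emph{given} model structure on $\sSet$ (e.g.\ a Joyal fibration) and $\int(Y \to Y^{\Cube^1})$ to be a weak equivalence there, and since $\int$ is defined by homming out of the quotients $Q^n$ of representables, it has no evident compatibility with cubical exponentials (indeed $\Q$ fails to preserve products, as you note). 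Moreover, the path-object argument only handles fibrant $Y$, so you would additionally need fibrant replacements before acyclicity is known --- a circularity one must break separately. As written, the proof has a genuine gap exactly where the work is.

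The paper closes this gap by an entirely different, more elementary route that you should compare with: it shows (i) the counit $\varepsilon_X \colon \Q \int X \to X$ is a monomorphism, (ii) $\Q$ preserves monomorphisms, and (iii) $\int$ preserves pushouts of two monomorphisms and transfinite compositions (because each $Q^n$ is a compact quotient of a representable, so a map $Q^n \to B_1 \cup_A B_2$ must factor through one leg). Given a pushout of $\Q A \to \Q B$ along $\Q A \to X$ with $A \to B$ an acyclic cofibration (hence a monomorphism, which is precisely where the hypothesis on cofibrations enters), one factors the square through $\Q \int X \to \Q(B \cup_A \int X)$, observes the resulting right-hand square is a pushout of monomorphisms, applies $\int$, and uses $\eta$ being an isomorphism to identify $\int X \to \int Y$ with the acyclic cofibration $\int X \to B \cup_A \int X$. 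This handles closure under pushouts; closure under transfinite composition follows from compactness of the $Q^n$. No path objects, no fibrancy hypotheses, and no analysis of internal homs are needed. If you want to salvage your write-up, replace the path-object paragraph with this chain of lemmas; the rest of your outline can stand.
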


We precede the proof with several categorical lemmas.

\begin{lemma} \label{lem:epsilon-mono}
	For any $X \in \cSet$, the counit $\varepsilon_X \colon \Q \int X \to X$ is a monomorphism. 
\end{lemma}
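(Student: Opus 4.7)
Since monomorphisms in $\cSet$ are detected on $k$-cubes, it suffices to show that $(\varepsilon_X)_k \colon (\Q \int X)_k \to X_k$ is injective for every $k \geq 0$. As $\Q$ is the left Kan extension of $Q^\bullet$ along the Yoneda embedding, the coend formula gives
\[ (\Q \int X)_k \;\cong\; \int^{[n] \in \Delta} \cSet(\Cube^k, Q^n) \times \cSet(Q^n, X), \]
whose elements are equivalence classes of pairs $(\alpha, y)$ with $\alpha \colon \Cube^k \to Q^n$ and $y \colon Q^n \to X$, identified under $(Q^f \alpha, y) \sim (\alpha, y \circ Q^f)$ for every $f \colon [n] \to [m]$ in $\Delta$. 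The counit sends $[(\alpha, y)]$ to $y \circ \alpha$.

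Suppose $(\alpha_1, y_1)$ and $(\alpha_2, y_2)$ satisfy $y_1 \alpha_1 = y_2 \alpha_2$ in $X_k$. The resulting commuting square with corners $\Cube^k, Q^{n_1}, Q^{n_2}, X$ is exactly the input to \cref{cor:pushout-Q-any}, which yields a factorization through degeneracy-induced epimorphisms $p_1 \colon \Cube^k \twoheadrightarrow Q^{m'}$, $p_2 \colon \Cube^k \twoheadrightarrow Q^{n'}$, and $p_3 \colon \Cube^k \twoheadrightarrow Q^r$ (with $Q^r$ the pushout of the first two), together with cubical maps $Q^{m'} \to Q^{n_1}$, $Q^{n'} \to Q^{n_2}$, $Q^{m'} \to Q^r$, $Q^{n'} \to Q^r$, and a single extension $z \colon Q^r \to X$ compatible with both $y_1$ and $y_2$. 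By \cref{lem:Qff}, each of the cubical maps between $Q$-objects is of the form $Q^\phi$ for a unique simplicial $\phi$; write $f_1, f_2$ for the simplicial maps corresponding to $Q^{m'} \to Q^{n_1}$ and $Q^{n'} \to Q^{n_2}$, and $g_1, g_2$ for those corresponding to $Q^{m'} \to Q^r$ and $Q^{n'} \to Q^r$.

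The coend relations now produce the desired chain of identifications:
\[ (\alpha_1, y_1) = (Q^{f_1} p_1, y_1) \sim (p_1, y_1 \circ Q^{f_1}) = (p_1, z \circ Q^{g_1}) \sim (Q^{g_1} p_1, z) = (p_3, z), \]
where the equalities without $\sim$ use the factorizations from \cref{cor:pushout-Q-any} (together with $z \circ Q^{g_1} = y_1 \circ Q^{f_1}$ by the compatibility of $z$ with $y_1$, and $Q^{g_1} p_1 = p_3$ by pushout commutativity), and the two $\sim$'s are the coend relations for $f_1$ and $g_1$. By symmetry, $(\alpha_2, y_2) \sim (p_3, z)$, so $(\alpha_1, y_1) \sim (\alpha_2, y_2)$ in the coend, proving injectivity.

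The main step is recognizing \cref{cor:pushout-Q-any} as the exact combinatorial tool needed: it packages the common refinement of two maps $\Cube^k \to Q^{n_1}$ and $\Cube^k \to Q^{n_2}$ that agree over $X$ into a single extension $z \colon Q^r \to X$, while ensuring that all intermediate maps between $Q$-objects come from $\Delta$ (\cref{lem:Qff}), so that the coend relations directly witness the required identification without any further normal-form analysis.
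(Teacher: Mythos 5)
Your proof is correct and follows essentially the same route as the paper's: both describe $(\Q\int X)_k$ as a colimit/coend of pairs $\Cube^k \to Q^n \to X$ and use \cref{cor:pushout-Q-any} to refine two such pairs with equal composites to a common pair, showing they were already identified. Your write-up merely makes explicit the coend relations and the role of \cref{lem:Qff} that the paper leaves implicit in the phrase ``identified in the colimit.''
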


\begin{proof}
Unwinding the definitions, we see that $k$-cubes of $\Q\int X$ are represented by composable pairs of the form $\Cube^k \to Q^n \to X$.
Two such $k$-cubes are identified by $\varepsilon_X$ if they fit into a commutative square of the form
\begin{ctikzpicture}
   \matrix[diagram]
   {
     |(a)| \Cube^k	& |(b)| Q^n \\
     |(c)| Q^m		    & |(d)| X \\
   };

   \draw[->] (a) to (b);
   \draw[->] (a) to (c);
   \draw[->] (b) to (d);
   \draw[->] (c) to (d);
 \end{ctikzpicture}
This square can be factored as in \cref{cor:pushout-Q-any}, which shows that the two $k$-cubes of $\Q\int X$ are identified in the colimit.
\end{proof}

\begin{lemma} \label{lem:int-po}
  The functor $\int \colon \cSet \to \sSet$ preserves pushouts of two monomorphisms.
\end{lemma}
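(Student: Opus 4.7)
The plan is to exploit the concrete description $\int X = \cSet(Q^\bullet, X)$, so that $(\int X)_n = \cSet(Q^n, X)$ for each $n$. Since pushouts in $\sSet$ and colimits in $\Set$ are both computed levelwise, it suffices to show that for each $n$, the representable functor $\cSet(Q^n, -) \colon \cSet \to \Set$ preserves pushouts of two monomorphisms.

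Fix monomorphisms $A \into B$ and $A \into C$ with pushout $D$. Since $\cSet$ is a presheaf topos, such pushouts are effective unions: the induced maps $B \to D$ and $C \to D$ are monomorphisms, and, identifying $A$, $B$, $C$ with sub-cubical-sets of $D$, we have $B \cap C = A$ (checked pointwise in $\Set$).

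The key observation is that $\pi_n \colon \Cube^n \twoheadrightarrow Q^n$ is an epimorphism, so any $\varphi \colon Q^n \to D$ is determined by the precomposition $\varphi \pi_n \colon \Cube^n \to D$; by the Yoneda lemma, this composite corresponds to the ``top $n$-cube'' $u_\varphi := \varphi(\pi_n(\id_{[1]^n})) \in D_n = B_n \cup_{A_n} C_n$. Thus $u_\varphi$ lies in $B_n$ or in $C_n$; suppose the former. Yoneda then provides a unique $\tilde{\psi} \colon \Cube^n \to B$ with $u_\varphi$ as its corresponding $n$-cube, and since $B \into D$ is a monomorphism and $(B \into D) \circ \tilde{\psi} = \varphi \pi_n$ factors through $\pi_n$, the map $\tilde{\psi}$ must also be constant on $\sim$-equivalence classes, hence descends uniquely to a map $\psi \colon Q^n \to B$ factoring $\varphi$ through $B \into D$. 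The case $u_\varphi \in C_n$ is symmetric.

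This yields surjectivity of the canonical comparison map $\cSet(Q^n, B) \sqcup_{\cSet(Q^n, A)} \cSet(Q^n, C) \to \cSet(Q^n, D)$. For injectivity, if $\varphi_B \colon Q^n \to B$ and $\varphi_C \colon Q^n \to C$ agree after composition into $D$, then their common top cube lies in $B_n \cap C_n = A_n$, so both factor through $A$ and represent the same element of the pushout in $\Set$. No single step poses a substantial obstacle; the argument hinges on recognizing that $Q^n$, although not representable, is a quotient of one and hence is controlled by a single top cube, after which the factorization becomes a routine combination of Yoneda and the effective-union property of pushouts of monomorphisms in a topos.
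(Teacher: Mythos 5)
Your proposal is correct and takes essentially the same route as the paper: reduce to showing each $\cSet(Q^n,-)$ preserves the pushout, and use the fact that $Q^n$ is a quotient of a representable (hence controlled by its top cube) to factor any map $Q^n \to D$ through one of the two legs. The paper states this factorization in one line; your top-cube/Yoneda argument and the effective-union observation are exactly the details it leaves implicit.
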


\begin{proof}
  Consider a pushout square in $\cSet$ where $A \to B_i$ are monomorphisms:
  \begin{ctikzpicture}
   \matrix[diagram]
   {
     |(a)| A	  & |(b)| B_1 \\
     |(c)| B_2 & |(d)| P \\
   };

   \draw[->] (a) to (b);
   \draw[->] (a) to (c);
   \draw[->] (b) to (d);
   \draw[->] (c) to (d);
 \end{ctikzpicture}
  The pushout inclusions are monomorphisms and $Q^n$ is a quotient of a representable. Hence any map $Q^n \to P$ must factor through one of the inclusions $B_i \into P$. It follows that each of the functors $\cSet(Q^n , -) \colon \cSet \to \Set$ preserves this pushout. Since colimits in $\sSet$ are computed pointwise, $\int$ preserves this pushout as well.
\end{proof}

\begin{lemma} \label{lem:int-comp}
  The functor $\int \colon \cSet \to \sSet$ preserves transfinite compositions.
\end{lemma}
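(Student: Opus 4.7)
\emph{Proof plan.} The plan is to reduce the claim to a compactness property of the objects $Q^n$. Since colimits in $\sSet$ are computed pointwise and $(\int X)_n \iso \cSet(Q^n, X)$, it suffices to show that for each $n$, the representable functor $\cSet(Q^n, -) \colon \cSet \to \Set$ preserves transfinite compositions.

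For this, I would show that each $Q^n$ is a finite colimit of representables in $\cSet$. The defining pushout square for $Q^n$ builds it out of the representable $\Cube^n$ together with the objects $\bigcup_{0 < i < n} \Cube^{i-1} \otimes \Cube^{n-i}$ and $\bigcup_{0 < i < n} \Cube^{i-1}$. By the definition of the geometric product, each summand $\Cube^{i-1} \otimes \Cube^{n-i}$ is isomorphic to $\Cube^{n-1}$ and hence representable. Each of the two finite unions is itself an iterated pushout of representables along their pairwise intersections, which are again representable (given by lower-dimensional faces), so $Q^n$ is indeed a finite colimit of representables.

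Now representables are finitely presentable in any presheaf category, because $\cSet(\Cube^k, -) \iso \ev_{[1]^k}$ preserves all (small) colimits; and the class of finitely presentable objects is closed under finite colimits. Hence $Q^n$ is finitely presentable, so $\cSet(Q^n, -)$ preserves all filtered colimits, and in particular all transfinite compositions, completing the argument.

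The only (modest) obstacle is to verify that the finite union $\bigcup_{0 < i < n} \Cube^{i-1} \otimes \Cube^{n-i}$ genuinely assembles as a finite colimit diagram of representables: that is, that iterating the pushout-along-intersection construction terminates at representable pieces. This reduces to the observation that intersections of these face inclusions are themselves lower-dimensional faces, hence representable, and so an induction on $n$ settles it. No substantive difficulty remains beyond this bookkeeping.
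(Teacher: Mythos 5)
Your proposal is correct and follows essentially the same route as the paper: reduce to the pointwise statement and show that each $\cSet(Q^n,-)$ preserves filtered colimits because $Q^n$ is compact. The only difference is that you justify compactness by exhibiting $Q^n$ as a finite colimit of representables, whereas the paper simply notes that $Q^n$ is a quotient of a representable; your version is, if anything, the more careful one, since in general a quotient of a finitely presentable object is only finitely generated.
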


\begin{proof}
  It suffices to show that the result holds pointwise, i.e., for functors $\cSet (Q^n, -) \colon \cSet \to \Set$. Each $Q^n$ is compact, as a quotient of a representable, and hence $\cSet (Q^n, -)$ preserves filtered colimits.
\end{proof}

\begin{lemma} \label{lem:Q-mono}
  The functor $\Q \colon \sSet \to \cSet$ preserves monomorphisms.
\end{lemma}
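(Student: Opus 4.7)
Every monomorphism $f \colon X \hookrightarrow Y$ of simplicial sets admits a skeletal filtration $X = Y^{(-1)} \subseteq Y^{(0)} \subseteq Y^{(1)} \subseteq \cdots$ with $Y = \bigcup_n Y^{(n)}$, in which each inclusion $Y^{(n-1)} \subseteq Y^{(n)}$ is obtained as the pushout of a coproduct of boundary inclusions $\partial\Delta^n \hookrightarrow \Delta^n$, indexed by the nondegenerate $n$-simplices of $Y$ not in the image of $f$. Since $\Q$ is a left adjoint it preserves all colimits, and in the presheaf category $\cSet$ monomorphisms are stable under coproducts, pushouts, and transfinite compositions (each verified pointwise in $\Set$). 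Thus it suffices to show that $\Q(\partial\Delta^n) \to Q^n$ is a monomorphism for every $n \geq 0$.

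\textbf{Normal form for the boundary case.} I would establish a unique \emph{normal form} for $k$-cubes of any $\Q X$: every element of $(\Q X)_k \iso \colim_{\Delta \downarrow X} Q^m_k$ is uniquely represented by a pair $(x, \phi)$ with $x \colon \Delta^m \to X$ a nondegenerate simplex and $\phi \in Q^m_k$ \emph{primitive}, meaning that in the representation $(f_1, \ldots, f_m)$ of \cref{prop:Qn-unique-representation} no $f_j$ equals $\const_0$ or $\const_1$. Existence follows by normalizing $x$ via Eilenberg--Zilber on $X$ and, whenever the current $\phi$ contains a $\const_0$ or a trailing $\const_1$, sliding that entry across the equivalence $(x, Q(d_i^\Delta)\psi) \sim (x\circ d_i^\Delta, \psi)$ coming from the cosimplicial structure on $Q^\bullet$; indeed, the cubical face maps $Q(d_i^\Delta) \colon Q^{m-1} \to Q^m$ tabulated just after \cref{prop:Qn} precisely insert a $\const_0$ or append a trailing $\const_1$, and each reduction strictly decreases the dimension of the simplex (re-applying Eilenberg--Zilber to $x \circ d_i^\Delta$), so the process terminates. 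Granted uniqueness, the lemma is immediate: for $i \colon \partial\Delta^n \hookrightarrow \Delta^n$, the map $\Q(i)$ sends the normal form $(x, \phi)$ to $(i\,x, \phi)$, and since $i$ is mono and $x$ is nondegenerate, $i\,x$ remains nondegenerate in $\Delta^n$ (a standard Eilenberg--Zilber consequence, using a section of the degeneracy in a would-be decomposition of $i\,x$ and the injectivity of $i$), so the image is again in normal form; distinct normal forms on the left therefore produce distinct $k$-cubes on the right.

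\textbf{Main obstacle.} The technical heart is the \emph{uniqueness} of the normal form, i.e., that any zigzag $(x_0, \phi_0) \sim (x_1, \phi_1) \sim \cdots \sim (x_N, \phi_N)$ in the defining colimit connecting two normalized pairs forces $(x_0, \phi_0) = (x_N, \phi_N)$. The argument runs parallel to, and is a refinement of, the factorization step at the end of the proof of \cref{lem:eta-iso}: using the unique factorization of maps in $\Delta$ as a surjection followed by an injection, together with Eilenberg--Zilber, one reduces the zigzag to one with nondegenerate simplices and face-map connecting morphisms. Since the explicit description of each $Q(d_i^\Delta)$ as inserting $\const_0$ or appending $\const_1$ shows that no nontrivial face map can carry one primitive cube to another, such a reduced zigzag must consist of identities, so the two endpoints agree.
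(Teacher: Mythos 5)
Your overall route is the right one, and it is essentially the argument the paper compresses into the single phrase ``immediate by induction on skeleta'': reduce along the (relative) skeletal filtration, use that $\Q$ preserves colimits and that monomorphisms of presheaves are closed under coproducts, pushouts (along arbitrary maps), and transfinite composition, and thereby reduce to showing that $\Q(\partial\Delta^n) \to Q^n$ is a monomorphism. That reduction is complete and correct, and your identification of the \emph{primitive} cubes (those whose canonical representative from \cref{prop:Qn} has no $\const_0$ entries and no trailing $\const_1$) as exactly the cubes of $Q^n$ not in the image of any coface is also correct and is the right combinatorial invariant: by \cref{prop:maltsiniotis} every coordinate is $\const_0$, $\const_1$, or a $\max_A$, and the cofaces tabulated after \cref{prop:Qn} precisely insert a $\const_0$ or append a $\const_1$.

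The one place where your argument is asserted rather than proved is the uniqueness of the normal form, which you correctly flag as the main obstacle: ``one reduces the zigzag to one with nondegenerate simplices and face-map connecting morphisms'' is exactly the Eilenberg--Zilber-type confluence statement whose proof is the entire content, and as written it is not carried out (an intermediate vertex of the zigzag may be non-primitive or degenerate, and different reduction orders must be shown to agree). For the lemma you actually need only $X = \partial\Delta^n$, and there you can avoid general zigzags entirely: since $\partial\Delta^n$ is the colimit of its proper faces over the poset of proper nonempty subsets $S \subsetneq [n]$, and $\Q$ preserves this colimit, it suffices to check (by induction on $n$, using \cref{prop:Qn}) that each composite coface $Q^{|S|-1} \to Q^n$ is levelwise injective and that the images satisfy $\operatorname{im}(Q^S) \cap \operatorname{im}(Q^{S'}) = \operatorname{im}(Q^{S \cap S'})$; a diagram of subobjects indexed by a meet-closed poset with correct pairwise intersections has its colimit equal to the union. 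Both facts are finite coordinate checks with the canonical representatives, and together they give exactly your normal form in the only case required. I would recommend restructuring the second half of your proof around this, since it turns the hand-waved zigzag reduction into a concrete verification.
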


\begin{proof}
  Immediate by induction on skeleta.
\end{proof}

At this point, we fix a model structure on $\sSet$ and let $J_\Delta$ be the generating set of its acyclic cofibrations. We set $J = \Q(J_\Delta)$ and run the Small Object Argument on $J$ to generate a factorization system $(\Sat(J), \mbox{RLP}(J))$ on $\cSet$. 

\begin{lemma} \label{lem:ac-pushouts}
  Let $A \to B$ be an acyclic cofibration of simplicial sets and let
   \begin{ctikzpicture}
   \matrix[diagram]
   {
     |(a)| \Q A	& |(b)| X \\
     |(c)| \Q B  & |(d)| Y \\
   };

   \draw[->] (a) to (b);
   \draw[->] (a) to (c);
   \draw[->] (b) to (d);
   \draw[->] (c) to (d);
 \end{ctikzpicture}
 be a pushout square in $\cSet$. Then the map $X \to Y$ is a weak equivalence (i.e., its image under $\int$ is a weak equivalence).
\end{lemma}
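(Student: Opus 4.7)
The plan is: although $\int$ does not preserve arbitrary pushouts, \cref{lem:int-po} ensures it preserves pushouts of two monomorphisms. I will therefore replace the given pushout with an equivalent one whose upper map (in place of $\Q A \to X$) is a monomorphism, so that \cref{lem:int-po} applies and the analysis reduces to $\sSet$.

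Concretely, let $\alpha \colon A \to \int X$ be the adjunct of $\Q A \to X$, and factor the latter through the counit as $\Q A \xrightarrow{\Q \alpha} \Q \int X \xrightarrow{\varepsilon_X} X$. Form the pasted diagram
\[
\begin{tikzcd}
\Q A \ar[r, "\Q \alpha"] \ar[d] \ar[dr, phantom, very near end, "\ulcorner"] & \Q \int X \ar[d] \ar[r, "\varepsilon_X"] \ar[dr, phantom, very near end, "\ulcorner"] & X \ar[d] \\
\Q B \ar[r] & Z \ar[r] & Y
\end{tikzcd}
\]
where the left square is the pushout defining $Z \coloneqq \Q B \cup_{\Q A} \Q \int X$ and the outer rectangle is the given pushout; the right square is then a pushout by pasting.

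The main technical step is to verify that the right square is a pushout of two monomorphisms. The top arrow $\varepsilon_X$ is a mono by \cref{lem:epsilon-mono}. The left arrow $\Q \int X \to Z$ is a pushout of $\Q A \to \Q B$, which is a mono by \cref{lem:Q-mono} (since $A \to B$ is a cofibration, hence a mono by the hypothesis on the model structure), and pushouts of monos in the presheaf category $\cSet$ remain monos. Applying \cref{lem:int-po} then yields a pushout square in $\sSet$.

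To conclude, I use that $\int \varepsilon_X$ is an isomorphism: by the triangle identity, it has right inverse $\eta_{\int X}$, which is an iso by \cref{thm:coreflection}. In the $\int$-image of the right square the top arrow is then an iso, so the opposite edge $\int(X \to Y)$ is isomorphic to the left arrow $\int \Q \int X \to \int Z$. Since $\Q$ preserves pushouts, $Z \iso \Q(B \cup_A \int X)$, and hence $\int Z \iso B \cup_A \int X$ via $\eta$; under these identifications the arrow in question becomes the $\sSet$-pushout inclusion $\int X \to B \cup_A \int X$ along the acyclic cofibration $A \to B$, which is itself a weak equivalence. Therefore $\int(X \to Y)$ is a weak equivalence.
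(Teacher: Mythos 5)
Your proof is correct and follows essentially the same route as the paper: factor the attaching map through the counit $\varepsilon_X$, recognize the resulting right-hand square as a pushout of monomorphisms (via \cref{lem:epsilon-mono,lem:Q-mono}), apply \cref{lem:int-po}, and use \cref{lem:eta-iso} to identify $\int X \to \int Y$ with the acyclic cofibration $\int X \to B \cup_A \int X$. The only cosmetic difference is that you justify the monomorphy of $\Q\int X \to Z$ by stability of monos under pushout rather than by applying $\Q$ to the simplicial pushout inclusion; both are fine.
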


\begin{proof}
  Applying $\int$ to the span $ \Q B \leftarrow \Q A \to X$ and taking the pushout, we obtain a diagram
     \begin{ctikzpicture}
   \matrix[diagram]
   {
     |(a)| A	& |(b)| \int X \\
     |(c)| B  & |(d)| B \cup_A \int X \\
   };

   \draw[->] (a) to (b);
   \draw[->] (a) to (c);
   \draw[->] (b) to (d);
   \draw[->] (c) to (d);
 \end{ctikzpicture}
 and, in particular, $\int X \to B \cup_A \int A$ is an acyclic cofibration. We use its image under $\Q$ to factor the original square
      \begin{ctikzpicture}
   \matrix[diagram]
   {
     |(a)| \Q A	& |(b)| \Q \int X                    & |(c)| X \\
     |(d)| \Q B  & |(e)| \Q (B \cup_A \int X) & |(f)| Y  \\
   };

   \draw[->] (a) to (b);
   \draw[->] (a) to (d);
   \draw[->] (b) to (c);
   \draw[->] (b) to (e);
   \draw[->] (c) to (f);
   \draw[->] (d) to (e);
   \draw[->] (e) to (f);
 \end{ctikzpicture}
 Since $\Q$ is a left adjoint, the left hand square is a pushout and hence by the pasting lemma for pushouts (the formal dual of the pasting lemma for pullbacks, cf.~\cite[Ex.~III.4.8]{mac-lane:cwm})  so is the right hand square. Moreover, the right hand square is a pushout of monomorphisms (by \cref{lem:epsilon-mono,lem:Q-mono}) and hence it is preserved by $\int$.
  
  Thus by \cref{lem:eta-iso}, the map $\int X \to \int Y$ is isomorphic to $\int X \to B \cup_A \int X$, hence an equivalence.
\end{proof}

\begin{lemma}\label{lem:Sat-is-weq}
  Every morphism in $\Sat(J)$ is a weak equivalence of cubical sets.
\end{lemma}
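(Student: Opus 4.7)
The plan is to prove the stronger statement that for every $f \in \Sat(J)$, the map $\int f$ is an acyclic cofibration of simplicial sets; the conclusion then follows immediately since acyclic cofibrations are weak equivalences. Since $\Sat(J)$ consists of retracts of transfinite compositions of pushouts of coproducts of maps in $J$, it suffices to verify the stronger statement on generating maps and then to check stability under each closure operation.

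For the base case, if $g \in J_\Delta$, then by \cref{lem:eta-iso} the unit isomorphism yields $\int \Q g \iso g$, which is an acyclic cofibration. For a coproduct of maps in $J$, note that $\Q$ preserves coproducts (being a left adjoint), so $\bigsqcup_i \Q g_i \iso \Q\bigl(\bigsqcup_i g_i\bigr)$; since acyclic cofibrations in $\sSet$ form a saturated class, $\bigsqcup_i g_i$ is again an acyclic cofibration, which reduces the coproduct case to the single-generator pushout case treated next.

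For a pushout in $\cSet$ of $\Q h$ along an arbitrary map, where $h \colon A \to B$ is an acyclic cofibration in $\sSet$, the proof of \cref{lem:ac-pushouts} actually establishes more than its stated conclusion: the pushout leg is shown to be isomorphic, after applying $\int$, to the simplicial pushout of $h$ along $A \to \int X$, which is itself an acyclic cofibration (not merely a weak equivalence) since cofibrations in $\sSet$ are monomorphisms and acyclic cofibrations are closed under pushout. For a transfinite composition of such cellular maps, \cref{lem:int-comp} shows that $\int$ preserves transfinite compositions, so its image is a transfinite composition of acyclic cofibrations in $\sSet$, hence again an acyclic cofibration. Finally, acyclic cofibrations are closed under retracts.

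The main subtlety is the strengthening from weak equivalence to acyclic cofibration at the pushout step: weak equivalences are not in general stable under transfinite composition, so bootstrapping through the transfinite composition requires the stronger conclusion. This strengthening is already implicit in the proof of \cref{lem:ac-pushouts}, where the relevant map is explicitly exhibited as a pushout of an acyclic cofibration in $\sSet$.
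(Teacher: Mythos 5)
Your proof is correct and follows essentially the same route as the paper: generators are handled via \cref{lem:eta-iso}, pushouts via \cref{lem:ac-pushouts}, transfinite compositions via \cref{lem:int-comp} together with the corresponding closure property in $\sSet$, and retracts trivially. The one thing you make explicit that the paper leaves implicit is the need to carry the stronger invariant that $\int f$ is an \emph{acyclic cofibration} (not merely a weak equivalence) through the cellular induction --- this is exactly what the proof of \cref{lem:ac-pushouts} actually delivers, and it is what the paper's appeal to ``the analogous property for simplicial sets'' must mean for the transfinite composition step to close.
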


\begin{proof} 
  The class $\Sat(J)$ is obtained by closing the set $J$ under retracts, pushouts, and transfinite compositions.
  Each morphism in $J$ is a weak equivalence by \cref{lem:eta-iso}.
  The closure under retracts is clear, the closure under pushouts follows from \cref{lem:ac-pushouts}, and the closure under transfinite composition by \cref{lem:int-comp} and the analogous property for simplicial sets.
\end{proof}

\begin{proof}[Proof of \cref{thm:model-structure-cSet}]
By \cite[Cor.~3.1.7]{hkrs:transfer-thm}, any cofibrantly generated model structure on $\sSet$ is an accessible model structure.
Using \cite[Prop.~2.1.4.(1)]{hkrs:transfer-thm},\footnote{Although the article \cite{hkrs:transfer-thm} contains an error, it was fixed in \cite{garner-kedziorek-riehl}, and thus its results can be applied in our setting.} to obtain the right induced model structure, it suffices to verify that maps with the left lifting property with respect to fibrations are weak equivalences, which is exactly the statement of \cref{lem:Sat-is-weq}.

%

The functor $\int$ is a right Quillen functor by the definition of the model structure on $\cSet$.
The unit of $\Q \adjoint \int$ is a weak equivalence by \cref{lem:eta-iso}.
Applying \cref{lem:eta-iso}, we also see that for any cubical set $X$, the map $\int \varepsilon_X \colon \int \Q \int X \to \int X$ is an isomorphism, and hence the counit is a weak equivalence as well.
\end{proof}


\section{Examples} \label{5-examples}

By \cref{thm:model-structure-cSet}, we immediately obtain the following:

\begin{corollary}
  Both the Joyal and the Quillen model structures on $\sSet$ right induce Quillen equivalent model structures on $\cSet$. \qed
\end{corollary}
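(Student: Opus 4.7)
The plan is to derive the corollary as a direct application of \cref{thm:model-structure-cSet}. The theorem provides, for any cofibrantly generated model structure on $\sSet$ whose cofibrations are monomorphisms, a Quillen equivalent right-induced model structure on $\cSet$. So the entire task reduces to verifying these two hypotheses for the Quillen and Joyal model structures separately.

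For the Quillen model structure, I would cite the classical facts that it is cofibrantly generated by the boundary inclusions $\partial \Delta^n \into \Delta^n$ (with generating trivial cofibrations given by the horn inclusions $\Lambda^n_k \into \Delta^n$), and that its cofibrations are precisely the monomorphisms. Both are standard and can be found in Hovey or Goerss--Jardine. For the Joyal model structure, the cofibrations again coincide with the monomorphisms and are generated by the same set $\{\partial \Delta^n \into \Delta^n\}$; the generating trivial cofibrations can be taken to be the inner horn inclusions together with the map $\Delta^0 \into J$ from a point into the nerve of the free-living isomorphism (as shown by Joyal and reproven by Lurie). In particular, both hypotheses of \cref{thm:model-structure-cSet} are satisfied.

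Having verified the hypotheses, \cref{thm:model-structure-cSet} produces right-induced model structures on $\cSet$, and in each case the adjunction $\Q \adjoint \int$ becomes a Quillen equivalence. I do not expect any genuine obstacle here; the only mild subtlety worth flagging for the reader is that in the cubical model structure right-induced from Joyal's, the resulting fibrant objects deserve to be called \emph{cubical quasicategories}, and one obtains a model for the homotopy theory of $(\infty,1)$-categories in $\cSet$, as advertised in the introduction. It may be worth a one-line remark to this effect, but no additional proof is required.
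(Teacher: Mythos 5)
Your proposal is correct and matches the paper exactly: the corollary is stated as an immediate consequence of \cref{thm:model-structure-cSet}, with the only content being the standard facts that both the Quillen and Joyal model structures are cofibrantly generated with cofibrations the monomorphisms. Your extra detail on the generating (trivial) cofibrations is harmless but not needed.
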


Let $\cSet_{IJ}$ and $\cSet_{IQ}$ denote these model structures, respectively. The following diagram summarizes the four model structures involved:
\begin{ctikzpicture}
   \matrix[diagram]
   {
     |(a)| \sSet_Q	& & |(b)| \cSet_{IQ} \\
     && \\
     |(c)| \sSet_J & & |(d)| \cSet_{IJ} \\
   };

   \draw[->] (a) to [bend left=8] node[above]  {$\Q$} (b);
   \draw[->] (a) to [bend left=12] node[right]  {$\id$} (c);
   \draw[->] (b) to [bend left=12] node[right]  {$\id$} (d);
   \draw[->] (c) to [bend left=8] node[above]  {$\Q$} (d);
   
   \draw[->] (b) to [bend left=8] node[below] {$\int$} (a);
   \draw[->] (c) to [bend left=12] node[left] {$\id$} (a);
   \draw[->] (d) to [bend left=12] node[left] {$\id$} (b);
   \draw[->] (d) to [bend left=8] node[below] {$\int$} (c);
 \end{ctikzpicture}
where all adjunctions are Quillen adjunctions and the horizontal functors are Quillen equivalences.

Cofibrations in these model structures are always monomorphisms, since by adjointness they are generated by the images of boundary inclusions $\partial \Delta^n \into \Delta^n$ under $\Q$. 
However not all monomorphisms are cofibrations and in fact very few cubical sets are cofibrant.

\begin{example}
  The cubical set $\Cube^2$ is cofibrant in neither $\cSet_{IJ}$ nor $\cSet_{IQ}$. Indeed, by construction each $2$-cube of a cofibrant cubical set has a degenerate face among its four main faces, which is not the case for $\Cube^2$.
\end{example}



To our knowledge, the model structure $\cSet_{IJ}$ is the first model structure on $\cSet$ presenting the homotopy theory of $(\infty,1)$-categories. However, there is a well-established model structure on $\cSet$ for the homotopy theory of $\infty$-groupoids, namely, the Grothendieck model structure, denoted $\cSet_G$. 

In the remainder of this section, we will show that the adjoint pair of identity functors defines a Quillen equivalence between $\cSet_{G}$ and $\cSet_{IQ}$.
We begin by describing the Grothendieck model structure.
Following \cite[Thm.~1.7]{cisinski:univalent-universes}, it is a cofibrantly generated model structure in which the cofibrations are the monomorphisms and fibrations have the right lifting property with respect to the open box inclusions $\chorn^n_{i,\varepsilon} \to \Cube^n$ (open boxes are defined in the standard way).

For our purposes however, it is better to see the Grothendieck model structure as left induced by a certain functor $\cSet \to \sSet$, which we shall next describe.

The embedding $\Cube \into \Cat \overset{\N}{\to} \sSet$ defines a co-cubical object in the category of simplicial sets, explicitly given by $[1]^n \mapsto (\Delta^1)^n$. This yields an adjoint pair
\[ \T \colon \cSet \rightleftarrows \sSet : \! \U \]
with $\T$ given by the left Kan extension of $\Cube \into \sSet$ along the Yoneda embedding, and $(\U X)_n = \cSet((\Delta^1)^n, X)$.
By \cite[Prop.~8.4.28 and Lem.~8.4.29]{CisinskiAsterisque}, one sees that the Grothendieck model structure on $\cSet$ is indeed left induced by $\T$ and further, by \cite[Thm.~8.4.30]{CisinskiAsterisque}, $\T$ is in fact a Quillen equivalence.


Thus we can compare the two model structures for $\infty$-groupoids on $\cSet$ directly.

\begin{proposition}
  The adjunction $\id \colon \cSet_{IQ} \rightleftarrows \cSet_G :\! \id$ is a Quillen equivalence.
\end{proposition}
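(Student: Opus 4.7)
The plan is to verify the claim in two stages: first, that the identity adjunction is a Quillen adjunction, and then that it is a Quillen equivalence. For the Quillen adjunction, observe that every cofibration in $\cSet_{IQ}$ is a monomorphism (as noted earlier in this section), and cofibrations in $\cSet_G$ are exactly the monomorphisms, so $\id \colon \cSet_{IQ} \to \cSet_G$ preserves cofibrations. Since $\cSet_{IQ}$ is cofibrantly generated with generating acyclic cofibrations $\Q(\Lambda^n_k \into \Delta^n)$, it suffices to show that each such map is a weak equivalence in $\cSet_G$. Because $\T \adjoint \U$ is a Quillen equivalence and every object of $\cSet_G$ is cofibrant, a map $f$ in $\cSet$ is a weak equivalence in $\cSet_G$ if and only if $\T f$ is a weak equivalence in $\sSet_Q$; thus it suffices to show that $\T \Q(\Lambda^n_k \into \Delta^n)$ is a weak equivalence of simplicial sets.

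The main technical step is to analyze the composite endofunctor $\T \Q \colon \sSet \to \sSet$. Since both $\T$ and $\Q$ are left adjoints, $\T \Q$ is the left Kan extension of $[n] \mapsto \T Q^n$ along the Yoneda embedding, and the goal is to establish a natural weak equivalence $\T \Q \to \id_\sSet$. Because $Q^n$ is the pushout defined in \cref{2-functors} and $\T \Cube^k = (\Delta^1)^k$, the simplicial set $\T Q^n$ is the corresponding pushout in $\sSet$: namely, $(\Delta^1)^n$ with certain positive faces contracted onto their initial factors. A direct combinatorial analysis then shows this quotient is weakly equivalent to $\Delta^n$, via the canonical inclusion of $\Delta^n$ as the simplex corresponding to the monotone path from $(0, \ldots, 0)$ to $(1, \ldots, 1)$ in $[1]^n$. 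This weak equivalence extends to all of $\sSet$ by cell induction, using that $\Q$ preserves monomorphisms (\cref{lem:Q-mono}) and that both $\Q$ and $\T$ preserve colimits. Applied to $\Lambda^n_k \into \Delta^n$, this yields the required weak equivalence of simplicial sets.

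For the Quillen equivalence, given the Quillen adjunction and the natural weak equivalence $\T \Q \simeq \id_\sSet$ established above, we argue as follows. Both $\Q \adjoint \int$ (\cref{thm:model-structure-cSet}) and $\T \adjoint \U$ are Quillen equivalences with $\sSet_Q$, so the derived functors of $\Q$ and $\T$ are equivalences of homotopy categories. Since $\T \Q$ is naturally weakly equivalent to the identity, the functor on homotopy categories induced by $\id \colon \cSet_{IQ} \to \cSet_G$ fits into a composite $\mathrm{Ho}(\sSet_Q) \to \mathrm{Ho}(\cSet_{IQ}) \to \mathrm{Ho}(\cSet_G) \to \mathrm{Ho}(\sSet_Q)$ that is equivalent to the identity; sandwiched between two equivalences, it is itself an equivalence, so $\id$ is a Quillen equivalence.

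The main obstacle is the combinatorial identification of $\T Q^n$ with $\Delta^n$ up to weak equivalence, which requires a careful description of $\T Q^n$ as a quotient of $(\Delta^1)^n$ and an explicit comparison with the embedded path simplex. Once this input is in place, the remainder of the argument proceeds by formal manipulations of Quillen adjunctions and the transitivity of Quillen equivalence through $\sSet_Q$.
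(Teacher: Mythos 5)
Your overall strategy for the Quillen adjunction part matches the paper's: both reduce to showing that $\T\Q$ carries the horn inclusions to weak equivalences of simplicial sets. Where you diverge is in how this is done, and there is a genuine gap in your version. You propose to build a \emph{natural} weak equivalence between $\T\Q$ and $\id_{\sSet}$, with the component at $\Delta^n$ given by the inclusion of the simplex corresponding to a monotone path from $(0,\ldots,0)$ to $(1,\ldots,1)$ in $[1]^n$, and then to propagate it by cell induction. The cell induction requires these components to be natural in $[n]$, i.e.\ to constitute a map of cosimplicial objects $\Delta^\bullet \to \T Q^\bullet$, and they do not. Already for $n=2$: the first coface $Q^1 \to Q^2$ is induced by $\partial_{2,0}$, whose image in $\N([1]^2)$ is the edge $(0,0) \leq (1,0)$, whereas the $\{0,2\}$-face of either maximal chain of $[1]^2$ is the diagonal $(0,0) \leq (1,1)$; these remain distinct $1$-simplices in $\T Q^2$ (only the edge $(1,0)\leq(1,1)$ is collapsed), so no choice of maximal chain satisfies the naturality square. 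Without naturality the gluing step of your induction cannot be carried out, so the claim that $\T\Q(\Lambda^n_k \into \Delta^n)$ is a weak equivalence is not yet established. The gap is repairable: a natural transformation does exist in the \emph{opposite} direction, $\T Q^\bullet \to \Delta^\bullet$, induced by the poset maps $[1]^n \to [n]$ recording (essentially) the position of the first coordinate equal to $1$, and one checks it is compatible with the cofaces and codegeneracies of $Q^\bullet$ and descends to the quotient. The paper sidesteps this entirely by applying geometric realization: since $|-|$, $\T$, and $\Q$ are all left adjoints, $|\T Q^n|$ is a quotient of $[0,1]^n$ homeomorphic to the topological $n$-simplex and $|\T\Q\Lambda^n_i| \cong |\Lambda^n_i|$, so both are contractible and the map between them is a weak equivalence; no natural simplicial comparison map is needed.

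On the Quillen equivalence part you are more explicit than the paper, which leaves this step essentially implicit. Your sandwiching argument --- that the derived functor of $\id \colon \cSet_{IQ} \to \cSet_G$ sits in a composite $\Ho(\sSet_Q) \to \Ho(\cSet_{IQ}) \to \Ho(\cSet_G) \to \Ho(\sSet_Q)$ naturally isomorphic to the identity, with the outer functors equivalences by \cref{thm:model-structure-cSet} and \cite[Thm.~8.4.30]{CisinskiAsterisque} --- is correct and is a worthwhile addition, but it again relies on the natural weak equivalence $\T\Q \simeq \id_{\sSet}$, so it inherits the gap above; once the natural transformation is constructed in the correct direction, this part goes through.
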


\begin{proof}
  As noted above, the cofibrations in the induced model structure are monomorphisms and hence cofibrations in the Grothendieck model structure.
  It thus suffices to check that the maps $\Q\Lambda^n_i \to Q^n$ are weak equivalences in the Grothendieck model structure.
  This follows by showing that both $\T\Q\Lambda^n_i$ and $\T Q^n$ are contractible simplicial sets.
  Indeed, using the fact that both $\T$ and $\Q$, as well as the geometric realization functor $|-| \colon \sSet \to \Top$ are left adjoints, we see that $|\T Q^n|$ is a quotient of $[0,1]^n$ homeomorphic to $\Delta_n$ (the topological simplex), whereas $|\T\Q\Lambda^n_i|$ is homeomorphic to $|\Lambda^n_i|$ (the topological horn).
\end{proof}




%
%
%
%





\bibliographystyle{amsalphaurlmod}
\bibliography{general-bibliography}

\end{document}